\newtheorem{thm}{Theorem}[section]
\newtheorem{prop}[thm]{Proposition}
\theoremstyle{definition}
\newtheorem{defn}[thm]{Definition}
\newcommand{\ud}{\,\mathrm{d}}
\newcommand{\olsi}[1]{\,\overline{\!{#1}}} 
\theoremstyle{remark}
\title{Stochastic Delay Differential Games: Financial Modeling and Machine Learning Algorithms}
\author{Robert Balkin\thanks{Department of Mathematics, University of California, Santa Barbara, CA 93106-3080, USA, \em{rbalkin@ucsb.edu}.} \and Hector D. Ceniceros \thanks{Department of Mathematics, University of California, Santa Barbara, CA 93106-3080, USA,  \em{ceniceros@ucsb.edu}.} \and Ruimeng Hu\thanks{Department of Mathematics, and Department of Statistics and Applied Probability, University of California, Santa Barbara, CA 93106-3080, {\em rhu@ucsb.edu}.}}
\begin{document}
\maketitle

\begin{abstract}

In this paper, we propose a numerical methodology for finding the closed-loop Nash equilibrium of stochastic delay differential games through deep learning. These games are prevalent in finance and economics where multi-agent interaction and delayed effects are often desired features in a model, but are introduced at the expense of increased dimensionality of the problem. This increased dimensionality is especially significant as that arising from the number of players is coupled with the potential infinite dimensionality caused by the delay.

Our approach involves parameterizing the controls of each player using distinct recurrent neural networks. These recurrent neural network-based controls are then trained using a modified version of Brown's fictitious play, incorporating deep learning techniques. To evaluate the effectiveness of our methodology, we test it on finance-related problems with known solutions. Furthermore, we also develop new problems and derive their analytical Nash equilibrium solutions, which serve as additional benchmarks for assessing the performance of our proposed deep learning approach.
\end{abstract}

\section{Introduction}
Stochastic delay differential games combine game theory and stochastic control problems with delay. These control problems encompass various models applicable to economics, advertising, and finance. For instance, in determining a firm's optimal advertising policy, Gozzi and Marinelli \cite{advertising_models_1} consider a model which incorporates the delayed impact of advertising expenditures on the firm's goodwill. Similarly, in finance, optimal investment and consumption decisions could also take into account delayed market features as is done by Pang and Hussain \cite{delayedproblem}. Furthermore, these delayed stochastic control problems can often be extended to incorporate interaction with competitors, who can influence both the underlying system dynamics and the objectives of individual actors. In the context of such scenarios, the stochastic control problem with delay can be further extended to a stochastic delay differential game. This framework captures the interaction among all participants (or players) who select their controls to optimize their objectives. The controls of each player affect the system dynamics, which are modeled as a system of stochastic delay differential equations (SDDEs). The outcome of the game is represented by the concept of Nash equilibrium, which is a collection of all players' choices, ensuring that no player has the incentive to deviate unilaterally.

Despite introducing mathematical and computational challenges, incorporating delay is crucial for developing more complex and realistic models that capture real-world phenomena. For instance, in the analysis of systemic risk, Carmona, Fouque and Sun~\cite{MFGandSysRisk} model bank lending and borrowing as a stochastic differential game without delay, assuming a specific form of bank repayments at time $t$. To enhance the model's realism, the same authors in collaboration with Mousavi~\cite{CarmonaFouqueMousaviSun} consider banks that must repay their borrowings at time $t$ by time $t+\tau$, introducing a delayed factor into the governing dynamics. While this model effectively captures the nature of delayed repayments, it also increases the mathematical and computational complexity of the underlying problem. However, given the widespread occurrence and realistic nature of delayed problems, it is essential to address the computational challenges they present.

The primary reason for these difficulties lies in the inherent dimensionality of the problem. Stochastic differential games already face the curse of dimensionality when the number of players, denoted as $N$, is large. Adding to this inherent complexity, stochastic delay differential games introduce a possibly infinite-dimensional component, as the drift and volatility of the associated SDDE depend on the entire path. To formalize this, we note that one can employ the approach of dynamic programming to characterize the value functions associated with the closed-loop Nash equilibrium through a system of Hamilton-Jacobi-Bellman (HJB) equations, enabling the determination of Nash equilibrium controls. However, the resulting HJB equations in the delayed case involve derivatives with respect to variables in an infinite-dimensional Hilbert space as detailed in Section 2.6.8 in the book by Fabbri, Gozzi, and Swiech~\cite{SOCinfdim}. Numerically solving this HJB system would require an additional high-dimensional approximation to handle the infinite dimensionality arising from the delay. However, deep learning methodologies are natural choices for solving problems with high dimensionality and have been used in similar instances. For example, Fouque and Zhang~\cite{fouque2019deep} parameterize the optimal control with neural networks to solve a mean field control problem arising from  an inter-bank lending model with delayed repayments, and Han and Hu~\cite{RNNSCPwD} solve the stochastic control problems with delay using neural networks. 

To address the challenge of high dimensionality in these problems, we propose a deep learning-based method that effectively handles the delay.  Inspired by the approach presented in \cite{RNNSCPwD}, which utilizes recurrent neural networks (RNNs) to solve stochastic control problems with delay, we introduce an algorithm for finding the Nash equilibrium of stochastic delay differential games. Specifically, we parameterize players' controls using RNNs and approximate their objective functions by sampling the game dynamics under these RNN-based controls. The parameters of the RNNs are then optimized using the concept of deep fictitious play, as introduced in \cite{DFPSDG,DFP2}. The utilization of neural network-based control functions enables us to reformulate the problem in a finite-dimensional setting. Now, the optimization for a given player revolves around selecting the neural network parameters. Moreover, employing RNNs, in particular, allows us to effectively capture the influence of delay in players' controls, as RNNs have the capability to learn the appropriate memory dependencies present in the true Nash equilibrium controls.

Separately, we introduce a new class of problems motivated by portfolio optimization with delayed dynamics and competition among portfolio managers \cite{delayedproblem, Competition_among_Port_Managers,lackerNoConsump,lackerwconsump}. These problems arise from a model where portfolio managers trade in a financial market, considering tax consequences and being incentivized to perform well on both an absolute and relative basis. Specifically, there is a delay between when taxes are realized and when they become due. We summarize our results for these new problems in Sections \ref{subSection the competition of portfolio managers with Delayed Tax Effects} and \ref{subSection the Competitive Consumption and Portfolio Allocation with Delayed Tax Effects problem}, with full motivations provided in Appendix \ref{Appendix Intuition} and proofs in Appendix \ref{appendix_proof}.

We then validate the proposed deep learning algorithm numerically on a set of problems with known closed-form solutions. This includes the new class of problems that we introduce and solve in this paper as well as the model introduced in \cite{CarmonaFouqueMousaviSun} to study the systemic risk in bank lending. By considering both new and existing problems, we assess the accuracy of our proposed method using their closed-form solutions as benchmarks. Our numerical experiments confirm the success of our algorithm in approximating the true Nash equilibrium for all the problems considered.

The outline of the paper is as follows. In Section \ref{The_Mathematical_Problem}, we introduce the mathematical description of stochastic delay differential games and define the notion of closed-loop Nash equilibrium. In Section \ref{Section benchmark examples}, we formulate and provide analytical solutions for the closed-loop Nash equilibrium of the stochastic delay differential games we later solve numerically using our proposed algorithm. Specifically, we devote Sections \ref{subSection the competition of portfolio managers with Delayed Tax Effects} and \ref{subSection the Competitive Consumption and Portfolio Allocation with Delayed Tax Effects problem} to present the new problems and their solutions. The model construction of these problems can be found in Appendix \ref{Appendix Intuition}, while the proofs of their solutions are in Appendix \ref{appendix_proof}. In Section \ref{section the discrete problem and algorithm}, we propose a numerical algorithm for approximating stochastic delay differential games, and in Section \ref{Section Numerical results}, we demonstrate the numerical results of our algorithm compared to the known solutions of the considered problems. Finally, we provide some concluding remarks in Section~\ref{Section conclusion}.

\section{The Mathematical Problem}\label{The_Mathematical_Problem}
We define the problem mathematically following the similar setup in \cite{RNNSCPwD} for stochastic control problems with delay. The general problem we consider begins with an SDDE system, where the delay can be potentially present in both the state variables as well as the controls. Formally, on a complete probability space $(\Omega, \mathcal{F}, \mathbb{P})$, we have the $N$-player stochastic delay differential game driven by the dynamics
\begin{equation}\label{general_SDDG_dynamics}
\begin{aligned}
 & \ud \bm{X}^{\bm{\alpha}}_t = \mu (t,\bm{X}^{\bm{\alpha}}_{[t-\tau,t]}, \bm{\alpha}_{[t-\tau,t]}) \ud t + \sigma(t,\bm{X}^{\bm{\alpha}}_{[t-\tau,t]},\bm{\alpha}_{[t-\tau,t]})\mathrm{d} \bm{W}_t, \   t \in [0,T],\\
 & \bm{X}^{\bm{\alpha}}_{t} = \bm{\zeta}(t), \   t \in [-\tau,0], \\
& \bm{\alpha}_{t} = \bm{\phi}(t),  \  t \in [-\tau,0), \\
\end{aligned}
\end{equation}
where $\bm{X}^{\bm{\alpha}}$ is the state process which takes values in $\mathbb{R}^n$, and $\bm{\alpha} = (\alpha^1, \cdots, \alpha^N)$ is the collection of all players' controls. Here, $\alpha^i_t$ is the strategy or decision of player $i$ at time $t$ and takes values in the control space $\mathcal{A}^i \subset \mathbb{R}^{m_i}$. We use the notation $\bm{X}^{\bm{\alpha}}_{[t-\tau,t]}$ to represent the paths of the stochastic process $\bm{X}^{\bm{\alpha}}$ along the interval $[t-\tau,t]$, and similar notation for $\bm{\alpha}_{[t-\tau,t]}$. We call $\tau > 0$ (deterministic) the ``length of the delay'' or simply the ``delay'' as Eq.~\eqref{general_SDDG_dynamics} shows that the increment of the state process at time $t$ depends on the entire history of the state and control processes as far back as $\tau$ units in the past. The drift, $\mu$, and volatility, $\sigma$, are functionals that map into $\mathbb{R}^n$ and $\mathbb{R}^{n \times k}$ respectively, and $\bm{W}$ is a $k$-dimensional, standard Brownian motion. 

We remark that there is a special case in which one can write the state process as $\bm{X} = (X^1, \cdots, X^N)$, where $X^i$ is affected only through the control $\alpha^i$. In this instance, $X^i$ is the private state of player $i$. In our case, we generically assume $\bm{X}^{\bm{\alpha}}_t(\omega) \in \mathbb{R}^n$ and could represent a combination of both private states as well as public states -- those that are shared and influenced collectively.

 Formally, we define $\bm{X}^{\bm{\alpha}}_{[t-\tau,t]}$ to be a map from $[-\tau,0]$ to the space of square integrable random variables $L^2(\Omega)$ given by $\bm{X}^{\bm \alpha}_{[t-\tau,t]}(s) := \bm{X}^{\bm \alpha}_{s+\tau+t}$. In particular, we seek solutions $\bm{X}^{\bm{\alpha}}$ to Eq.~\eqref{general_SDDG_dynamics} such that for each $t \in [0,T]$, we have that $\bm{X}^{\bm{\alpha}}_{[t-\tau,t]} \in L^2(\Omega; C([-\tau,0] ; \mathbb{R}^n))$. Here, the space $L^2(\Omega; C([-\tau,0] ; \mathbb{R}^n))$ is defined as the normed space of $C([-\tau,0] ; \mathbb{R}^n)$ valued random variables with the norm given by
\begin{equation*}
||\bm{Z}||_{L^2(\Omega; C([-\tau,0] ; \mathbb{R}^n))} = \left(\mathbb{E}\left[ \sup_{s \in [-\tau,0]} |\bm{Z}_{s}(\omega)|^2 \right] \right)^{\frac{1}{2}}.
\end{equation*}
The stochastic path $\bm{\alpha}_{[t-\tau,t]}$ is defined analogously by ${\bm \alpha}_{[t-\tau,t]}(s) := \bm{\alpha}_{s+\tau+t}$, where the stochastic processes $(\bm{\alpha}_t)_{t \in [0,T]}$ belongs to an admissible set $\mathbb{A}$ defined later by the set definition \eqref{admissible_sets}.

For a fixed choice of controls $\bm \alpha$, one can consider the existence and uniqueness of the SDDE \eqref{general_SDDG_dynamics}. For this SDDE, one can require $\mu$ and $\sigma$ to be Lipschitz in the second argument to ensure the existence and uniqueness of a strong solution. To be precise, this Lipschitz condition is
\begin{equation*}
\begin{aligned}
   &  ||\mu(t,\bm{x_1}, \bm{\alpha}_{[t-\tau,t]}) - \mu(t,\bm{x_2}, \bm{\alpha}_{[t-\tau,t]})||_{L^2(\Omega)} \leq L ||\bm{x_1} - \bm{x_2} ||_{L^2(\Omega; C([-\tau,0] ; \mathbb{R}^n))}, \\
   &  ||\sigma(t,\bm{x_1}, \bm{\alpha}_{[t-\tau,t]}) - \sigma(t,\bm{x_2}, \bm{\alpha}_{[t-\tau,t]})||_{L^2(\Omega)} \leq L ||\bm{x_1} - \bm{x_2} ||_{L^2(\Omega; C([-\tau,0] ; \mathbb{R}^n))}, \\
\end{aligned}
\end{equation*}
for some $L>0$ and for all $t \in [0,T], \  \bm{x_1}, \bm{x_2} \in L^2(\Omega; C([-\tau,0] ; \mathbb{R}^n))$. For more details of the existence and uniqueness theory for SDDEs, we refer to the work by Mohammed~\cite{Mohammed2}.

Now that we have defined the SDDE and considered its existence and uniqueness for a given $\bm \alpha$, we proceed by specifying the requirements for $\bm \alpha$. We require that each control, $\alpha^i$, is in the class of closed-loop controls within an admissible set. Intuitively, the closed-loop controls are those that take the form $\alpha^i_t = \phi^i(t,\bm{X}^{\bm{\alpha}}_{[-\tau,t]})$, and therefore represent a decision at time $t$ based on observation of the state process up to and including this time. Formally, closed-loop controls are progressively measurable processes with respect to the filtration generated by the state process $\bm{X}^{\bm{\alpha}}_t$. Denoting $\mathcal{F}_t^{\bm{X}}$ to be the filtration generated by $\bm{X}^{\bm{\alpha}}_{[-\tau,t]}$, we define the admissible set of closed-loop controls $\mathbb{A}^i$ for player $i$ to be
\begin{equation}\label{admissible_sets}
    \mathbb{A}^i = \Big\{ \mathcal{F}_t^{\bm{X}}\mathrm{-progressively \ measurable \ processes \ } \beta^i : [-\tau,T] \times \Omega \rightarrow \mathcal{A}^i \subset \mathbb{R}^{m_i}  \bigg| \int_{-\tau}^T \mathbb{E} [|\beta^i_t|^2] \ud t < \infty \Big\},
\end{equation}
and we denote the product space of admissible controls by $\mathbb{A} = \otimes_{i=1}^N \mathbb{A}^i$ along with the control space for all players by $\mathcal{A} = \otimes_{i=1}^N \mathcal{A}^i$.

Next, we define the running and terminal costs for player $i$ as  $f^i$ and $g^i$, respectively. Here, $f^i: [0,T] \times L^2(\Omega, C([-\tau,0];\mathbb{R}^{n})) \times L^2(\Omega, C([-\tau,0];\mathcal{A})) \rightarrow \mathbb{R}$ and $g^i: L^2(\Omega, C([-\tau,0];\mathbb{R}^{n})) \rightarrow \mathbb{R}$ are deterministic measurable functionals. From these functionals, we define the expected cost  $J^i$ for player $i$ to be
\begin{equation}\label{general_SDDG_cost}
    J^i[\bm{\alpha}] = \mathbb{E}  \left[ \int_0^T f^i(t,\bm{X}^{\bm{\alpha}}_{[t-\tau,t]},\bm{\alpha}_{[t-\tau,t]})\mathrm{d}t + g^i(\bm{X}^{\bm{\alpha}}_{[T-\tau,T]})   \right].
\end{equation}
We remark that one can also consider games where $J^i$ in Eq.~\eqref{general_SDDG_cost} defines the reward of player $i$ rather than cost. In this case, one can cast the problem as one of costs by taking the cost of player $i$ to be $-J^i$. Because of this, we will be referring to problems where $J^i$ represents the cost of player $i$ unless otherwise stated.

The problem we consider is to find the Nash equilibrium in $\mathbb{A}$ for the stochastic delay differential game described in \eqref{general_SDDG_dynamics} and \eqref{general_SDDG_cost}. The Nash equilibrium is a set of controls whereupon each player has optimally chosen their own control given the choices of controls for the other players. To be precise, we understand the Nash equilibrium through the following definition.

\begin{defn}\label{NashEq}
 We say that $\bm{\alpha}^* = (\alpha^{*1} , \cdots, \alpha^{*N}) \in \mathbb{A}$ is a Nash equilibrium if  
\begin{equation}\label{general_NE}
J^i[\bm{\alpha}^*] \leq J^i[\alpha^{*1} , \cdots, \alpha^{*i-1}, \ \beta^{i}, \alpha^{*i+1},\alpha^{*N}], \quad  \forall i \in \{1 , \cdots, N \}, \, \forall \beta^{i} \in \mathbb{A}^i.
\end{equation}
\end{defn}

\noindent We remark that since $\mathbb{A}$ by definition contains the admissible, closed-loop controls, we call a Nash equilibrium $\bm{\alpha}^* \in \mathbb{A}$ a closed-loop Nash equilibrium.

In conclusion, the full problem of finding the closed-loop Nash equilibrium for a stochastic delay differential game is defined through the key components \eqref{general_SDDG_dynamics}--\eqref{general_NE}. In essence, Eq.~\eqref{admissible_sets} defines the appropriate space of controls, while Eqs.~\eqref{general_SDDG_dynamics}, \eqref{general_SDDG_cost} define a map from the choices of controls for each player $\alpha^i \in \mathbb{A}^i$ to the cost experienced by a given player $J^i$. The condition \eqref{general_NE} expresses how each player rationally chooses her strategy 
based on her own cost, given the choices of the other players, leading to equilibrium.

\section{Three Games with Delay}\label{Section benchmark examples}
In this section, we present three stochastic differential games with delay. In Section~\ref{subSection the competition of portfolio managers with Delayed Tax Effects} and \ref{subSection the Competitive Consumption and Portfolio Allocation with Delayed Tax Effects problem}, we derive two novel problems that are inspired by \cite{delayedproblem,Competition_among_Port_Managers,lackerwconsump,lackerNoConsump}. 
For clarity, we shall present the mathematical formulations and highlight analytical results below and defer modeling motivations and intuitions for these new problems to Appendix \ref{Appendix Intuition} and the proofs of their solutions  to Appendix \ref{appendix_proof}. In Section~\ref{subSection Inter-Bank Lending Model}, we briefly review a stochastic delay differential game arising from inter-bank lending, as discussed in \cite{CarmonaFouqueMousaviSun}, and summarize the results therein. All three problems will serve as benchmarks for the numerical methodology we propose in  Section \ref{section the discrete problem and algorithm}.

\subsection{Competition between Portfolio Managers with Delayed Tax Effects}\label{subSection the competition of portfolio managers with Delayed Tax Effects}

We consider a portfolio game between $N$ managers where everyone's award depends on both their absolute and relative performance, subject to delayed tax effects. Such a problem is inspired by the model problems introduced in \cite{delayedproblem} and \cite{lackerNoConsump}, and the full intuition and derivations are elaborated in Appendix \ref{Appendix Intuition}.

Let $X^i_t \in \mathbb{R}$ be the wealth at time $t$ of an investor $i$. Her wealth process is influenced by $\pi^i_t \in \mathbb{R}$, the \emph{fraction} of wealth she chooses at time $t$ to allocate into a risky asset, while the remaining is left in a money market account accruing at a risk-free rate $r \in \mathbb{R}$. At time $t$, the investor pays taxes at a rate of $\mu_2$ on her exponentially averaged past wealth $Y_t^i$. Precisely, the dynamics for the wealth of each player $i \in \{1,\cdots,N\}$ are given by
\begin{equation}\label{XdyanmicsMertonNoCons}
\begin{aligned}
  \ud X^i_t & =  \left[(\mu_1 - r) \pi^i_t X^i_t  + rX^i_t -\mu_2 Y^i_t  \right] \ud t + \sigma \pi^i_t X^i_t \ud W_t,  \ t \in (0,T], \\
    Y^i_t & = \int_{-\infty}^t \lambda e^{-\lambda(t- s)} X^i_{s} \ud s,  \ t \in (0,T], \\
  X^i_t & = \zeta^i(t), \ t \in (-\infty,0]. \\
\end{aligned}
\end{equation}
 Here, $W$ is a 1-D Brownian motion, and the initial wealth $\zeta^i$ is positive and bounded for $t \in (-\infty,0]$. The parameter $\mu_1 \in \mathbb{R}$ is the mean return of the stock with $\mu_1 > r$, and $\sigma>0$ is its volatility from the Black-Scholes model. The parameter $\lambda > 0$ is the arrival rate of tax billings as explained in Appendix \ref{appendix Portfolio Optimization with Delayed Taxes}.  We note that in this case, the length of the delay is $\tau = \infty$ as seen through the dependency on $Y^i_t$ in Eq.~\eqref{XdyanmicsMertonNoCons} which itself depends on the entire path $X^i_{(-\infty,t]}$.

 We consider two cases for the reward for player $i$. The first case is based on the constant absolute risk aversion (CARA) utility and is given by 
\begin{equation}\label{costMertonNoConsCARA1}
\begin{aligned}
    J^i[\bm \pi] = \mathbb{E} \left[U_i \left( Z^i_{disc,T} - \theta_i \olsi{Z}_{disc,T}  \right) \right], \\
\end{aligned}
\end{equation}
where $0 < \theta_i < 1$, and
\begin{equation}\label{costMertonNoConsCARA2}
 \olsi{Z}_{disc,T} = \frac{1}{N} \sum_{i=1}^N Z_{disc,T}^i, \qquad U_i(z)  = - \exp(-\frac{1}{\delta_i} z),\\
\end{equation}
with $\delta_i>0$ and $Z^i_{disc}$ defined by
\begin{equation}\label{a and Z_disc}
\begin{aligned}
 & Z^i_t  = X^i_t + aY^i_t, \qquad \mathrm{where} \ a =  \frac{-(r+\lambda) + \sqrt{(r+\lambda)^2 - 4 \lambda \mu_2}}{2\lambda}, \\
 & Z_{disc,t}^i  = e^{-(r+\lambda a)t}Z^i_t.\\
 \end{aligned}
\end{equation}
The second case is the based on the constant relative risk aversion (CRRA) utility and given by
 \begin{equation}\label{costMertonNoConsCRRA1}
\begin{aligned}
    J^i[\bm \pi] & = \mathbb{E} \left[U_i \left( Z^i_{disc,T}  \olsi{Z}_{disc,T}^{-\theta_i}  \right) \right], 
\end{aligned}
\end{equation}
where $0 < \theta_i < 1$ and
\begin{equation}\label{costMertonNoConsCRRA2}
  \olsi{Z}_{disc,T}  =  \left(\prod_{i=1}^N Z^i_{disc,T}  \right)^{1/N}, \qquad  U_i(z) = \begin{cases}  
        \frac{1}{1- \frac{1}{\delta_i}} z^{1- \frac{1}{\delta_i}}, &  \delta_i \neq 1, \\
        \log(z), &  \delta_i = 1,
    \end{cases}
\end{equation}
with $\delta_i>0$ and $Z^i_{disc}$ defined by Eq.~\eqref{a and Z_disc}. 

From the definition above, we notice that $(r+\lambda)^2 - 4 \lambda \mu_2 > 0$ must be required, which essentially means that the tax effect cannot be too large. We further require $r+\lambda > 0$, resulting in $a<0$. We also remark that $r+\lambda a$ can be ascribed the meaning of a ``tax-adjusted risk-free rate'' and $Z^i_t$ can be ascribed the ``tax-adjusted wealth''. The utilities in Eq.~\eqref{costMertonNoConsCARA1} and  Eq.~\eqref{costMertonNoConsCRRA1} have meaningful interpretations, as discussed in Appendices \ref{appendix effective derivation} and \ref{appendix competition of portfolio managers with Delayed Tax Effects}, respectively.

Lastly, for the CRRA case, the admissible set for $\pi^i$ is extended with additional requirements, i.e.,  we will take $\pi^i \in \mathbb{A}^i$:
\begin{equation}\label{CRRA no cons admissible set}
\mathbb{A}^i= \Big\{ \mathcal{F}_t^{\bm{X}}\mathrm{-progressively \ measurable \  } \pi^i : [0,T] \times \Omega \rightarrow \mathbb{R} \   \Big|  \ \exists K >0 : |\pi^i_t X^i_t| \leq K | Z^i_t| \  \Big\},
\end{equation}
where $\mathcal{F}_t^{\bm{X}}$ is the filtration generated by $(X^1_{(-\infty,t]}, \cdots, X^N_{(-\infty,t]})$. With $\pi^i \in \mathbb{A}^i$ and taking $\zeta^i(t)$  chosen such that $Z^i_t = X^i_t + aY^i_t > 0$ for all $t \leq 0$, we can show for all $i$, $Z^i_{disc,t} > 0$ a.s., and therefore the utility given by Eqs.~\eqref{costMertonNoConsCRRA1}--\eqref{costMertonNoConsCRRA2} is well defined. This is shown in Appendix \ref{CRRA utility proof no consump}.

The solution to the CARA case is summarized in the following proposition.

\begin{prop}\label{thmCARAnoconsump}
Consider the stochastic delay differential game defined by the dynamics \eqref{XdyanmicsMertonNoCons}  with reward for each player $i \in \{ 1, \cdots, N \}$ given by $J^i = J^i[\bm \pi]$ as defined through Eq.~\eqref{costMertonNoConsCARA1}--\eqref{a and Z_disc}. Then, there is a closed-loop Nash equilibrium $\bm \pi^*$ given by the controls
\begin{equation*}
        \pi^{i,*}_t X_t^{i, \ast}  =  \frac{\mu_1-r}{\sigma^2} \left( \delta_i + \frac{\theta_i\bar{\delta}}{1-\bar{\theta}} \right) \frac{1}{e^{-(r+\lambda a)t}},
\end{equation*}
where $\displaystyle\bar{\delta} = \frac{1}{N} \sum_{i=1}^N \delta_i$, $\displaystyle\bar{\theta} = \frac{1}{N} \sum_{i=1}^N \theta_i$, and $X_t^{i, \ast}$ satisfies the dynamics in \eqref{XdyanmicsMertonNoCons} associated with $\pi^{i, \ast}$. More precisely, the Nash equilibrium strategy at time $t$ is to invest a deterministic dollar amount into the risky asset, independent of the current wealth level.
\end{prop}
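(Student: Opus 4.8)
The plan is to remove the delay by the change of variables $Z^i_t = X^i_t + a Y^i_t$ and then recognize the resulting game as a delay-free constant-absolute-risk-aversion relative-performance game. First I would compute $\ud Y^i_t$: differentiating $Y^i_t = \lambda\int_{-\infty}^t e^{-\lambda(t-s)}X^i_s\,\ud s$ gives the delay-free ODE $\ud Y^i_t = \lambda(X^i_t - Y^i_t)\,\ud t$. Writing $\nu^i_t := \pi^i_t X^i_t$ for the dollar amount invested and applying It\^o to $Z^i$, the drift becomes $(\mu_1-r)\nu^i_t + (r+\lambda a)X^i_t - (\mu_2 + \lambda a)Y^i_t$. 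The whole point of the constant $a$ in \eqref{a and Z_disc} is that it solves $\lambda a^2 + (r+\lambda)a + \mu_2 = 0$, which is exactly the condition $-(\mu_2+\lambda a) = a(r+\lambda a)$ that collapses the $X^i$- and $Y^i$-terms into $(r+\lambda a)Z^i_t$. Hence $Z^i$ satisfies the delay-free SDE $\ud Z^i_t = [(\mu_1-r)\nu^i_t + (r+\lambda a)Z^i_t]\,\ud t + \sigma\nu^i_t\,\ud W_t$.

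Next I would discount: for $Z^i_{disc,t} = e^{-(r+\lambda a)t}Z^i_t$ the linear term cancels, leaving $\ud Z^i_{disc,t} = e^{-(r+\lambda a)t}[(\mu_1-r)\nu^i_t\,\ud t + \sigma\nu^i_t\,\ud W_t]$. Setting $\tilde\nu^i_t := e^{-(r+\lambda a)t}\nu^i_t$ gives the clean controlled It\^o process $\ud Z^i_{disc,t} = (\mu_1-r)\tilde\nu^i_t\,\ud t + \sigma\tilde\nu^i_t\,\ud W_t$, with no delay and no mean-reversion. The cost \eqref{costMertonNoConsCARA1} then reads $J^i = \mathbb{E}[-\exp(-\frac{1}{\delta_i}(Z^i_{disc,T} - \theta_i \bar{Z}_{disc,T}))]$, i.e. a standard CARA game with relative-performance coupling through the common noise $W$.

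To compute player $i$'s best response against fixed (candidate) controls of the others, I would collapse the objective onto a single scalar state. Writing $c_i := 1-\theta_i/N>0$ and $M^i_t := c_i Z^i_{disc,t} - \frac{\theta_i}{N}\sum_{j\ne i}Z^j_{disc,t}$, the terminal argument of $U_i$ is exactly $M^i_T$, and $M^i$ is driven by the effective control $u_t := c_i\tilde\nu^i_t - \frac{\theta_i}{N}\sum_{j\ne i}\tilde\nu^j_t$ through $\ud M^i_t = (\mu_1-r)u_t\,\ud t + \sigma u_t\,\ud W_t$. Since $c_i\ne 0$ and the others' controls are fixed deterministic functions, choosing $\tilde\nu^i$ is equivalent to choosing $u$, so player $i$ faces a one-dimensional Merton problem with exponential utility. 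The ansatz $V(t,m) = -\exp(-m/\delta_i + h(t))$ reduces the HJB to an ODE for $h$ and yields the constant (hence deterministic) optimal effective control $u^* = (\mu_1-r)\delta_i/\sigma^2$.

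Finally I would solve the fixed point. Imposing $c_i\tilde\nu^{i,*}_t - \frac{\theta_i}{N}\sum_{j\ne i}\tilde\nu^{j,*}_t = (\mu_1-r)\delta_i/\sigma^2$ for every $i$ and using the crucial identity $c_i + \theta_i/N = 1$, the system becomes $\tilde\nu^{i,*} = (\mu_1-r)\delta_i/\sigma^2 + \frac{\theta_i}{N}S$ with $S := \sum_j \tilde\nu^{j,*}$; summing over $i$ gives $S = (\mu_1-r)N\bar{\delta}/[\sigma^2(1-\bar{\theta})]$ and hence $\tilde\nu^{i,*} = \frac{\mu_1-r}{\sigma^2}(\delta_i + \frac{\theta_i\bar{\delta}}{1-\bar{\theta}})$, a constant. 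Undoing the substitution $\tilde\nu^i = e^{-(r+\lambda a)t}\pi^i X^i$ reproduces the claimed formula. The routine algebra (It\^o, the ODE for $h$) is straightforward; the main obstacle is the rigorous verification — confirming the value-function ansatz solves the HJB, that the candidate feedback is admissible, and, crucially, that the exponential integrability holds so that the local martingale in the verification argument is a true martingale, thereby establishing optimality of the deterministic candidate over all admissible (possibly stochastic) controls rather than merely among deterministic ones. I would also check well-posedness as a closed-loop control, which is immediate here since the equilibrium prescribes the deterministic dollar amount $\nu^{i,*}_t = e^{(r+\lambda a)t}\tilde\nu^{i,*}$.
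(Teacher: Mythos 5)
Your reduction is exactly the paper's: the ODE $\ud Y^i_t=\lambda(X^i_t-Y^i_t)\,\ud t$, the choice of $a$ as a root of $\lambda a^2+(r+\lambda)a+\mu_2=0$ so that the drift collapses to $(r+\lambda a)Z^i_t$, the discounting, and the transformed control $\tilde{\pi}^i_t=e^{-(r+\lambda a)t}\pi^i_tX^i_t$ turning the game into $\ud Z^i_{disc,t}=(\mu_1-r)\tilde{\pi}^i_t\,\ud t+\sigma\tilde{\pi}^i_t\,\ud W_t$ with CARA relative-performance terminal reward. Where you diverge is in how the reduced, delay-free game is solved: the paper stops at this point and invokes the known closed-form Nash equilibrium of Lacker--Zariphopoulou (\cite[Section~2, Corollary~4]{lackerNoConsump}), whereas you re-derive that result from scratch --- collapsing player $i$'s objective onto the scalar state $M^i_t=c_iZ^i_{disc,t}-\frac{\theta_i}{N}\sum_{j\ne i}Z^j_{disc,t}$, solving the resulting one-dimensional exponential-utility Merton problem by an HJB ansatz to get the constant effective control $u^*=(\mu_1-r)\delta_i/\sigma^2$, and then solving the linear fixed-point system using $c_i+\theta_i/N=1$ to recover $\tilde{\pi}^{i,*}=\frac{\mu_1-r}{\sigma^2}\bigl(\delta_i+\frac{\theta_i\bar{\delta}}{1-\bar{\theta}}\bigr)$. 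Your algebra checks out (the single common Brownian motion is what makes the diffusion terms of $M^i$ combine linearly, and the fixed-point sum gives $S(1-\bar{\theta})=(\mu_1-r)N\bar{\delta}/\sigma^2$ as you state). What the paper's route buys is that the verification burden --- the admissibility of deviations, the true-martingale property in the exponential-utility verification argument, and optimality over all progressively measurable (not just deterministic) deviations --- is outsourced to the cited corollary; your route is self-contained but, as you yourself flag, leaves precisely that verification step as the remaining work, so to be complete you would either have to carry it out or, in effect, cite the same reference the paper does.
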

\begin{proof}
See Appendix \ref{CARA utility proof no consump}.
\end{proof}

\noindent This proposition characterizes the resulting Nash equilibrium for the CARA case. For the CRRA case, we have the following result.

\begin{prop}\label{thmCRRAnoconsump}
Consider the stochastic differential game  with delay defined by the dynamics \eqref{XdyanmicsMertonNoCons},  the reward $J^i = J^i[\bm \pi]$ for each player $i \in \{ 1, \cdots, N \}$ defined through Eq.~\eqref{a and Z_disc}--\eqref{costMertonNoConsCRRA2}, and the admissible space $\otimes_{i=1}^N \mathbb{A}^i$  given by Eq~\eqref{CRRA no cons admissible set}. Then, there is a closed-loop Nash equilibrium $\bm \pi^*$ given by the controls
\begin{equation*}
        \pi^{i,*}_t  =  \frac{\mu_1-r}{\sigma^2} \left( \delta_i - \frac{\theta_i(\delta_i-1)\bar{\delta}}{1+\olsi{\theta(\delta-1)}} \right) \frac{X^i_t + aY^i_t}{X^i_t},
\end{equation*}
where $\displaystyle\bar{\delta} = \frac{1}{N} \sum_{i=1}^N \delta_i$ and 
$\displaystyle\olsi{\theta(\delta-1)} = \frac{1}{N} \sum_{i=1}^N \theta_i(\delta_i-1)$.
\end{prop}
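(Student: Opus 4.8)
The plan is to eliminate the (infinite) delay by a change of variables and then recognize what remains as a delay-free CRRA relative-performance game of the type in \cite{lackerNoConsump}. First I would apply It\^o's formula to $Z^i_t = X^i_t + aY^i_t$. Since $Y^i_t$ is an exponential average we have $\ud Y^i_t = \lambda(X^i_t - Y^i_t)\ud t$, and because $a$ is by \eqref{a and Z_disc} the root of $\lambda a^2 + (r+\lambda)a + \mu_2 = 0$, the $Y^i_t$-terms in the drift of $\ud Z^i_t$ cancel exactly, leaving
$$\ud Z^i_t = \big[(\mu_1 - r)\pi^i_t X^i_t + (r+\lambda a)Z^i_t\big]\ud t + \sigma\pi^i_t X^i_t \ud W_t.$$
Introducing the rescaled control $\rho^i_t := \pi^i_t X^i_t/Z^i_t$ (well-defined and bounded on the admissible set \eqref{CRRA no cons admissible set}, where $Z^i_{disc,t}>0$ a.s.), the discounted quantity $Z^i_{disc,t} = e^{-(r+\lambda a)t}Z^i_t$ satisfies the controlled geometric Brownian motion
$$\ud Z^i_{disc,t} = (\mu_1-r)\rho^i_t Z^i_{disc,t}\ud t + \sigma\rho^i_t Z^i_{disc,t} \ud W_t,$$
in which all memory has disappeared. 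This is the crucial step and the payoff of the special choice of $a$; it mirrors the reduction used in the CARA case (Proposition \ref{thmCARAnoconsump}).

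Next I would solve the resulting delay-free game in the variables $Z^i_{disc}$ with controls $\rho^i$. Passing to logarithms $\xi^i_t = \log Z^i_{disc,t}$ turns the CRRA criterion \eqref{costMertonNoConsCRRA1}--\eqref{costMertonNoConsCRRA2} into $J^i = q_i^{-1}\,\mathbb{E}\big[\exp\big(q_i[(1-\theta_i/N)\xi^i_T - (\theta_i/N)\sum_{j\neq i}\xi^j_T]\big)\big]$ with $q_i = 1 - 1/\delta_i$. For the best response of player $i$ I would fix constant competitor controls $\rho^j$, so that under the common noise $\sum_{j\neq i}\xi^j_T$ contributes only a deterministic term plus $\sigma(\sum_{j\neq i}\rho^j)W_T$. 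Setting up the HJB equation for player $i$ and making a power/exponential ansatz for the value function, I would verify that the optimizer is the constant, affine in the competitors' aggregate $R_{-i} := \sum_{j\neq i}\rho^{j}$, given by $\rho^{i,*} = [(\mu_1-r) - q_i(\theta_i/N)\sigma^2 R_{-i}]/[\sigma^2(1-p_i)]$, where $p_i = q_i(1-\theta_i/N)$. Well-posedness follows from $0<p_i<1$ when $\delta_i>1$ (strict concavity of the pointwise Hamiltonian) and from $p_i<0$ when $\delta_i<1$ (strict convexity of the associated minimization), with the log case $\delta_i=1$ recovered as the degenerate limit.

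Imposing the Nash fixed-point condition then sets $R_{-i} = S - \rho^{i,*}$ with $S := \sum_j\rho^{j,*}$. Substituting and exploiting the algebraic simplification $(1-p_i) - q_i\theta_i/N = 1-q_i = 1/\delta_i$ together with $\delta_i q_i = \delta_i - 1$ collapses the expression to $\rho^{i,*} = \delta_i(\mu_1-r)/\sigma^2 - (\delta_i-1)(\theta_i/N)S$. Summing over $i$ yields $S\,(1+\olsi{\theta(\delta-1)}) = N\bar\delta(\mu_1-r)/\sigma^2$, and back-substitution gives the explicit constant $\rho^{i,*} = \frac{\mu_1-r}{\sigma^2}\big(\delta_i - \theta_i(\delta_i-1)\bar\delta/(1+\olsi{\theta(\delta-1)})\big)$. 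Undoing the control change via $\pi^{i,*}_t = \rho^{i,*}(X^i_t + aY^i_t)/X^i_t$ produces the claimed formula, and this constant $\rho^{i,*}$ is admissible since $|\pi^{i,*}_t X^i_t| = |\rho^{i,*}|\,|Z^i_t|$.

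The hard part will be the verification step rather than the algebra: showing that the constant $\rho^{i,*}$ beats \emph{every} admissible (adapted, non-constant) competitor response, for which the expectation of the exponential no longer factorizes through a deterministic integrand. I would handle this by a rigorous HJB verification argument for the transformed GBM problem—checking the candidate value function solves the HJB equation, that the constant is its maximizer, and that the required martingale/transversality and integrability conditions hold—where the admissibility bound $|\pi^i_t X^i_t|\le K|Z^i_t|$ and the positivity of $Z^i_{disc}$ supply exactly the control needed to justify the exponential-moment computations.
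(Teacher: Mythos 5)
Your reduction of the delayed game to a delay-free one is exactly the paper's: It\^o on $Z^i_t = X^i_t + aY^i_t$, cancellation of the $Y^i$-drift via the quadratic defining $a$, the control change $\tilde\pi^i_t=\pi^i_tX^i_t/Z^i_t$, and the integrating factor yielding a controlled geometric Brownian motion for $Z^i_{disc}$. Where you diverge is the second half: the paper stops there and simply observes that the transformed problem is precisely the delay-free CRRA relative-performance game of \cite[Section~3, Corollary~15]{lackerNoConsump}, citing that result for the constant equilibrium $\tilde\pi^{i,*}$ and then inverting the transformation. You instead re-derive that corollary from scratch --- log change of variables, best response against constant competitors under the common noise, the fixed-point algebra through $(1-p_i)-q_i\theta_i/N=1/\delta_i$, and summation over $i$ --- and your algebra checks out and reproduces the stated constant. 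What your route buys is self-containedness; what it costs is exactly the verification step you flag at the end (that the constant beats every adapted admissible deviation), which is nontrivial and is precisely the content the citation supplies for free. Two small points you should not gloss over if you carry this out: (i) the paper proves $Z^i_t>0$ via the explicit exponential solution of the transformed SDE and then $X^i_t>0$ by a first-hitting-time contradiction using $a<0$; both are needed to make the control transformation invertible and the final formula $\pi^{i,*}_t=\rho^{i,*}(X^i_t+aY^i_t)/X^i_t$ meaningful, so they belong in the proof rather than being assumed; (ii) when arguing the best response you should note explicitly that a deviation by player $i$ does not alter the competitors' state processes (each $X^j$ is driven only by $\pi^j$), which is what legitimizes treating $R_{-i}$ as a fixed constant in the closed-loop setting.
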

\begin{proof}
See Appendix \ref{CRRA utility proof no consump}.
\end{proof}

\subsection{Consumption and Portfolio Allocation Game with Delayed Tax Effects}\label{subSection the Competitive Consumption and Portfolio Allocation with Delayed Tax Effects problem}

In addition to the delay effects in investment strategies as analyzed in Section~\ref{subSection the competition of portfolio managers with Delayed Tax Effects}, here we also consider players' consumption strategies which contribute to their relative utility in the reward. Such a problem without delay was studied in \cite{lackerwconsump}.

As before, $\pi^i_t$ represents the fraction of player $i$'s wealth allocated to the risky asset at time $t$. The second control process, $c^i_t$, is person $i$'s rate of consumption at time $t$ as a fraction of her wealth. In addition to the usual admissibility conditions given by Eq.~\eqref{admissible_sets}, we require that $c^i_t \geq 0 $ for all $t \in (0,T]$. In this case, the wealth dynamics for player $i \in \{1,\cdots,N\}$ are given by
\begin{equation}\label{XdyanmicsMertonwCons}
\begin{aligned}
 & \ud X^i_t =  \left[(\mu_1 - r) \pi^i_t X^i_t  + rX^i_t -\mu_2 Y^i_t - c^i_t X^i_t  \right] \ud t + \sigma \pi^i_t X^i_t \ud W_t, \ t \in (0,T], \\
 & X^i_t = \zeta^i(t), \ t \in (-\infty,0], \\
\end{aligned}
\end{equation}
where $Y^i_t = \int_{-\infty}^t \lambda e^{-\lambda(t- s)} X^i_{s} \ud s$ is the exponentially decayed moving average of past wealth. The parameters are taken as $\mu_1,r,\lambda, \sigma \in \mathbb{R}$: $\lambda, \sigma>0$, $\mu_1 > r$, and $\mu_2 > 0$. The results will stay the same if one has $\mu_2 \in \mathbb{R}$, but only $\mu_2>0$ corresponds to taxes. We further require that $(r+\lambda)^2 - 4 \lambda \mu_2 > 0$ and $r+\lambda > 0$, as we did in Section~\ref{subSection the competition of portfolio managers with Delayed Tax Effects}. Again, the length of delay is $\tau = \infty$ as $Y^i_t$ in Eq.~\eqref{XdyanmicsMertonwCons} depends on the entire path $X^i_{(-\infty,t]}$.

With the dynamics fully described, we now define the reward function for player $i$ to be given by
\begin{equation}\label{costMertonwConsCRRA1}
  J^i[\bm \pi, \bm c] = \mathbb{E} \left[\int_0^T U^i(C_{disc,t}^i \olsi{C_{disc,t} }^{-\theta_i}) \ud t + \epsilon_i U_i \left( Z_{disc,T}^i \olsi{Z_{disc,T}}^{-\theta_i}  \right)  \right],
\end{equation}
where $0<\theta_i<1$, $\epsilon_i > 0$, and
\begin{equation}\label{costMertonwConsCRRA2}
    \begin{aligned}
   & Z^i_t = X^i_t + a Y^i_t, &  a = \frac{-(r+\lambda) + \sqrt{(r+\lambda)^2 - 4 \lambda \mu_2}}{2\lambda}, \\
   &  C_{disc,t}^i =  e^{-(r+\lambda a)t} c^i_t X^i_t, & \olsi{C_{disc,t}} =  \left(\prod_{i=1}^N C^i_{disc,t}  \right)^{1/N}, \\
    & Z_{disc,t}^i = e^{-(r+\lambda a)t}Z^i_t, 
    & \olsi{Z_{disc,t}} =  \left(\prod_{i=1}^N Z^i_{disc,t}  \right)^{1/N}, \\
    \end{aligned}
\end{equation}
and the utility function is the constant relative risk aversion (CRRA) utility given by
\begin{equation}\label{costMertonwConsCRRA3}
 U_i(z)  = \begin{cases}  
        \frac{1}{1- \frac{1}{\delta_i}} z^{1- \frac{1}{\delta_i}}, &  \delta_i \neq 1, \\
        \log(z), &  \delta_i = 1,\\
    \end{cases}\\
\end{equation}
with $\delta_i>0$. 

In this case, we can again interpret  $r+\lambda a$ as the ``tax-adjusted risk-free rate'' and $Z^i_t$ as ``tax-adjusted wealth'', and the interpretation of the expected utility \eqref{costMertonwConsCRRA1} is  discussed in Appendix \ref{appendix the Competitive Consumption and Portfolio Allocation with Delayed Tax Effects problem}. This example mainly contrasts with that in Section \ref{subSection the competition of portfolio managers with Delayed Tax Effects} in that each player now has two controls. This changes the mathematical structure and poses additional numerical challenges.

Lastly, to ensure the utility in Eqs.~\eqref{costMertonwConsCRRA1}--\eqref{costMertonwConsCRRA3} is well defined, we require that the controls for player $i$, $(\pi^i,c^i)$, live in the admissible set $\mathbb{A}^i$:
\begin{equation}\label{CRRA w cons admissible set}
\mathbb{A}^i= \Big\{ \mathcal{F}_t^{\bm{X}}\mathrm{-progressively \ measurable \  } (\pi^i,c^i) : [0,T] \times \Omega \rightarrow \mathbb{R} \cross \mathbb{R}^+  \Big|   \exists K >0 : |\pi^i_t X^i_t|, |c^i_t X^i_t| \leq K | Z^i_t| \  \Big\},
\end{equation}
where $\mathcal{F}_t^{\bm{X}}$ is the filtration generated by $(X^1_{(-\infty,t]}, \cdots, X^N_{(-\infty,t]})$. With $(\pi^i,c^i) \in \mathbb{A}^i$ and taking the initial path $\zeta^i$  chosen such that $Z^i_t = X^i_t + aY^i_t > 0$ for all $t \leq 0$, we can show for all $i$, $C^i_{disc,t}, Z^i_{disc,t} > 0$ a.s., see details in Appendix \ref{CRRA utility proof w consump}.

The characterization of the closed-loop Nash equilibrium is given by the following proposition.

\begin{prop}\label{thmCRRAconsump}
Consider the stochastic differential game  with delay defined by the dynamics \eqref{XdyanmicsMertonwCons}, the reward $J^i = J^i[\bm \pi, \bm c]$ for each player $i \in \{ 1, \cdots, N \}$ defined through Eqs.~\eqref{costMertonwConsCRRA1}--\eqref{costMertonwConsCRRA3}, and the admissible space $\otimes_{i=1}^N \mathbb{A}^i$ defined by Eq~\eqref{CRRA w cons admissible set}.
Then,  there is a closed-loop Nash equilibrium $(\bm \pi^*, \bm c^*)$ given by the controls
\begin{equation}
  \begin{aligned}
        \pi^{i,*}_t & =  \frac{\mu_1-r}{\sigma^2} \left( \delta_i - \frac{\theta_i(\delta_i-1)\bar{\delta}}{1+\olsi{\theta(\delta-1)}} \right) \frac{X^i_t + aY^i_t}{X^i_t},\\
        c^{i,*}_t & = \begin{cases}
            \left(\beta_i^{-1} + (\gamma_i^{-1} - \beta_i^{-1}) e^{-\beta_i(T-t} \right)^{-1}\frac{X^i_t + aY^i_t}{X^i_t} , & \delta_i \neq 1, \\
            \left(T-t -\gamma_i^{-1} \right)^{-1} \frac{X^i_t + aY^i_t}{X^i_t}, & \delta_i = 1,\\   
        \end{cases} \\
    \end{aligned}  
\end{equation}
where $\displaystyle\bar{\delta} = \frac{1}{N} \sum_{i=1}^N \delta_i$, $\displaystyle\olsi{\theta(\delta-1)} = \frac{1}{N} \sum_{i=1}^N \theta_i(\delta_i-1)$ and  the parameters $\beta_i$ and $\gamma_i$ are given by
\begin{equation}
  \begin{aligned}
    \beta_i & = \frac{1}{2}(1-\delta_i) \left( \frac{\mu_1-r}{\sigma}\right)^2 \left(1- \frac{\theta_i \bar{\delta}}{1+\olsi{\theta(\delta-1)}} \right) \left( \delta_i - \frac{\theta_i \bar{\delta}}{1+\olsi{\theta(\delta-1)}} (\delta_i - 1)\right),\\
    \gamma_i & = \epsilon_i^{-\delta_i} \left( \left( \prod_{k=1}^N  \epsilon_k^{\delta_k} \right)^{1/N} \right)^{\theta_i(\delta_i-1)/(1+\olsi{\theta(\delta-1)})}.\\
    \end{aligned}  
\end{equation}
\end{prop}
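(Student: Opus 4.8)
The plan is to eliminate the (here infinite) delay by a change of variables, reducing the game to a standard non-delayed CRRA consumption--investment game under relative performance, and then to solve that reduced game by a verification (HJB) argument.

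First I would compute the dynamics of the tax-adjusted wealth $Z^i_t = X^i_t + aY^i_t$. Since $Y^i_t = \int_{-\infty}^t \lambda e^{-\lambda(t-s)}X^i_s\,\ud s$ satisfies $\ud Y^i_t = \lambda(X^i_t - Y^i_t)\,\ud t$, adding $a\,\ud Y^i_t$ to $\ud X^i_t$ contributes a non-control drift $(r+a\lambda)X^i_t-(\mu_2+a\lambda)Y^i_t$. The quadratic defining $a$, namely $\lambda a^2+(r+\lambda)a+\mu_2=0$ (exactly the condition singling out the root in Eq.~\eqref{costMertonwConsCRRA2}), forces $a(r+a\lambda)=-(\mu_2+a\lambda)$, so this drift collapses to $(r+a\lambda)Z^i_t$ and the delay term $Y^i_t$ disappears. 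Discounting by $e^{-(r+\lambda a)t}$ then cancels the remaining $(r+a\lambda)Z^i_t$ drift, leaving the clean, delay-free equation
\begin{equation*}
\ud Z^i_{disc,t} = (\mu_1-r)\,\tilde\pi^i_t\,\ud t - C^i_{disc,t}\,\ud t + \sigma\,\tilde\pi^i_t\,\ud W_t,\qquad \tilde\pi^i_t := e^{-(r+\lambda a)t}\pi^i_t X^i_t .
\end{equation*}
Because the reward in Eqs.~\eqref{costMertonwConsCRRA1}--\eqref{costMertonwConsCRRA3} is written purely in terms of $Z^i_{disc}$ and $C^i_{disc}$, the delayed game becomes a zero-interest-rate CRRA consumption--investment game with relative-performance (geometric-mean) interactions, i.e.\ the non-delayed setting of \cite{lackerwconsump}.

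Next I would solve the reduced game by verification. Writing $z_i=Z^i_{disc}$ and exploiting the single shared Brownian motion, I would posit the multiplicative ansatz $V^i(t,\vec z)=\frac{g_i(t)}{\eta_i}\big(z_i\bar z^{-\theta_i}\big)^{\eta_i}$ with $\eta_i=1-1/\delta_i$ and $\bar z=(\prod_k z_k)^{1/N}$. The key simplification is $z_j\partial_{z_j}V^i=\eta_i\rho_{ij}V^i$ with $\rho_{ij}=\mathbf{1}_{\{j=i\}}-\theta_i/N$, so that $\sum_j\nu^j\rho_{ij}=\nu^i-\theta_i\bar\nu$ is precisely the log-volatility of player $i$'s relative wealth. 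Carrying out the investment first-order condition in the HJB equation gives $\tfrac{1}{\delta_i}\nu^i+\eta_i\theta_i\bar\nu=\tfrac{\mu_1-r}{\sigma^2}$, i.e.\ $\nu^i=\tfrac{\mu_1-r}{\sigma^2}\delta_i-(\delta_i-1)\theta_i\bar\nu$; averaging over $i$ yields $\bar\nu=\tfrac{\mu_1-r}{\sigma^2}\,\bar\delta/(1+\overline{\theta(\delta-1)})$, and substituting back reproduces the stated $\pi^{i,*}$ after multiplying by $Z^i_t/X^i_t$. For consumption, the first-order condition in $\kappa^i:=C^i_{disc}/z_i$ ties $\kappa^i$ to $g_i(t)$ and to the geometric mean of the $\kappa^k$; imposing Nash consistency and inserting the optimal $(\nu^i,\kappa^i)$ into the HJB collapses the equation into a scalar Bernoulli/Riccati ODE for $g_i(t)$ with terminal datum $g_i(T)=\epsilon_i$. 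Solving it gives the consumption fraction in closed form: the linear coefficient of the ODE is exactly $\beta_i$ (collecting the $\tfrac12(1-\delta_i)(\tfrac{\mu_1-r}{\sigma})^2(\cdots)$ terms left over after optimizing the investment), while the terminal data through $\epsilon_i$ together with the consumption fixed point give $\gamma_i$; the case $\delta_i=1$ is recovered as the logarithmic limit, producing the $(T-t-\gamma_i^{-1})^{-1}$ form. I would then check admissibility---the candidate is a constant investment fraction and a deterministic consumption fraction, so $Z^i_{disc}$ remains positive and $(\pi^i,c^i)\in\mathbb A^i$---and close with a verification theorem confirming the profile is a closed-loop Nash equilibrium.

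The hard part will be the coupled fixed point defining the equilibrium. The investment coupling is linear and resolves cleanly by averaging as above, but the consumption coupling enters multiplicatively through $\bar C_{disc}^{-\theta_i}$ and is entangled with the time-dependent factors $g_i(t)$; showing that this coupled system decouples into the scalar ODEs whose solution is exactly the stated $\beta_i,\gamma_i$ is the crux of the argument. A secondary technical obstacle is making the verification rigorous in this discounted, relative, unbounded-control setting, where one must justify the required integrability and the vanishing of the stochastic-integral (martingale) terms when passing from the HJB identity to the optimality inequality \eqref{general_NE}.
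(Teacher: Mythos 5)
Your reduction step is exactly the paper's: compute $\ud Z^i_t$ for $Z^i_t=X^i_t+aY^i_t$, use the quadratic $\lambda a^2+(r+\lambda)a+\mu_2=0$ to collapse the drift to $(r+\lambda a)Z^i_t$, discount by $e^{-(r+\lambda a)t}$, and observe that the reward \eqref{costMertonwConsCRRA1}--\eqref{costMertonwConsCRRA3} is already written in terms of $Z^i_{disc}$ and $C^i_{disc}$, so the delayed game becomes a delay-free CRRA consumption--investment game with geometric-mean relative performance. Where you diverge is in how the reduced game is solved: the paper stops at the reduction and invokes \cite[Corollary~2.3]{lackerwconsump} verbatim (after passing to the fraction-type controls $\tilde\pi^i_t=\pi^i_tX^i_t/Z^i_t$, $\tilde c^i_t=c^i_tX^i_t/Z^i_t$ so that the reduced dynamics and admissibility match that reference exactly), whereas you propose to re-derive that corollary from scratch via a coupled HJB system with the multiplicative ansatz $V^i(t,\vec z)=\frac{g_i(t)}{\eta_i}(z_i\bar z^{-\theta_i})^{\eta_i}$. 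Your first-order conditions and the averaging fixed point for $\bar\nu$ do reproduce the stated $\pi^{i,*}$, and the Bernoulli ODE for $g_i$ is indeed where $\beta_i,\gamma_i$ come from, so the route is viable; it buys self-containedness at the cost of redoing the hardest part of \cite{lackerwconsump} (the consumption fixed point entangled with $g_i(t)$, and the verification with unbounded controls), which you correctly flag as the crux but do not carry out. Two details you should not skip if you take this route: (i) positivity of $Z^i_t$ (hence of $X^i_t$) must be established for \emph{every} admissible deviation, not just for the candidate equilibrium, since otherwise the utility and the transformation $\tilde\pi^i=\pi^iX^i/Z^i$ are not well defined along deviations --- the paper gets this from the bound $|\pi^i_tX^i_t|,|c^i_tX^i_t|\le K|Z^i_t|$ in \eqref{CRRA w cons admissible set}, which makes the transformed controls bounded and $Z^i$ an explicit positive exponential; and (ii) your verification must be set up for the closed-loop equilibrium concept of Definition \ref{NashEq}, i.e.\ the other players' feedbacks react to the deviator's influence on the state, which is exactly the regime \cite{lackerwconsump} treats.
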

\begin{proof}
See Appendix \ref{CRRA utility proof w consump}.
\end{proof}

\subsection{Inter-Bank Lending Model for Systemic Risk} \label{subSection Inter-Bank Lending Model}
The last problem we present comes from the study of systemic risk within inter-bank lending. The particular model we follow is a stochastic delay differential game introduced and studied by Carmona, Fouque, Mousavi, and Sun~\cite{CarmonaFouqueMousaviSun}. In this model, each bank lends/borrows monetary reserves to/from a central bank with their controls being their pace of lending/borrowing. The model includes loan repayments after a fixed time $\tau > 0$, which leads to the delayed component in the SDDE. The log-monetary reserves of bank $i$, $X^i_t$,  change in a differential manner with respect to this lending/borrowing dynamics along with some noise. Mathematically, $X_t^i$ satisfies the SDDE:
\begin{equation}\label{iblwd_eqn}
\begin{aligned}
  & \ud X^i_t = (\alpha^i_t - \alpha^i_{t-\tau}) \mathrm{d}t + \sigma \mathrm{d} W^i_t, &  \  t \in [0,T ],  \\
 & \alpha^i_{t} = 0 \in \mathbb{R}, & \ t \in [-\tau,0], \\
  & X^i_{0} = \xi^i \in \mathbb{R}. & \ \\ 
\end{aligned}
\end{equation}
The cost function for bank $i$ is given by
\begin{equation}\label{ibwld_loss}
J^i[\bm{\alpha}] =  \mathbb{E} \left[ \int_0^T  \left(\frac{1}{2}(\alpha^i_t)^2 - q \alpha^i_t (\bar{X}_t - X^i_t) + \frac{\epsilon}{2}(\bar{X}_t - X^i_t)^2  \right) \mathrm{d} t  + \frac{c}{2}(\bar{X}_T - X^i_T)^2    \right].
\end{equation}
The control $\alpha^i_t \in \mathbb{R}$ is their corresponding pace of borrowing ($\alpha^i_t > 0$) or lending ($\alpha^i_t < 0$) at time $t$. Although the level of volatility, $\sigma > 0$, is the same for each bank, we have that $\{W^i\}_{i=1}^N$ are independent 1-D Brownian motions meaning that each bank experiences their own idiosyncratic noise, and $\bar X_t = \frac{1}{N}\sum_{i=1}^N X_t^i$.

A bank's choice of action is dictated by its incentive mechanisms illustrated through Eq.~\eqref{ibwld_loss}. The main incentive of bank $i$ can be simply stated as a desire to borrow when they deem their reserves to be ``too low'' and lend when deemed ``too high''. In particular, bank $i$ will arithmetically compare their level of log-monetary reserves to the mean log-monetary reserves of all banks, $\bar{X}$, with a preference for bank $i$ to have $X^i_t \approx \bar{X}_t$. This is exemplified by the terms $q \alpha^i_t (\bar{X}_t - X^i_t)$, $\frac{\epsilon}{2}(\bar{X}_t - X^i_t)^2$, and $\frac{c}{2}(\bar{X}_T - X^i_T)^2$ in Eq.~\eqref{ibwld_loss}. The parameters $q \geq 0$, $\epsilon > 0$ and $c \geq 0 $ respectively represent the degrees to which a bank desires 1) to borrow when there are too little monetary reserves 2) to maintain near average capitalization of log-monetary reserves at all times 3) to have near average capitalization at the final time. These incentives to have near average levels of log-monetary reserves are balanced by the bank's inclination to avoid lending or borrowing all else equal as represented by the quadratic penalty $\frac{1}{2}(\alpha^i_t)^2$ in Eq.~\eqref{ibwld_loss}.


 The closed-loop Nash equilibrium for the stochastic delay differential game \eqref{iblwd_eqn}--\eqref{ibwld_loss} is derived and proven in \cite{CarmonaFouqueMousaviSun}. The result is restated in the proposition below for convenience.
 \begin{prop}[{\cite[Proposition 6.1]{CarmonaFouqueMousaviSun}}]\label{thm iblwd} Consider the stochastic differential game  with delay defined by the dynamics \eqref{iblwd_eqn} and with the reward for each player $i \in \{ 1, \cdots, N \}$ given by $J^i = J^i[\bm \alpha]$ as defined through Eq.~\eqref{ibwld_loss}. Then, there exists a closed-loop Nash equilibrium $\bm \alpha^*$ given by the control for each player $i \in \{1, \cdots, N \}$ by
\begin{equation*}
\begin{aligned}
\alpha^{i,*}_t   = 2 \left(1 - N^{-1} \right) \bigg[ & \left( E_1(t,0) + E_0(t) + \frac{q}{2(1-N^{-1})} \right) (\bar{X_t} - X_t^i)  \\
\ & + \int_{t-\tau}^t ( E_2(t,s-t,0) + E_1(t,s-t))(\olsi{\alpha^*}_{s} - \alpha^{i,*}_{s})\ud s  \bigg], \\
\end{aligned}
\end{equation*}
where $\displaystyle\olsi{\alpha^*} = \frac{1}{N}\sum_{i=1}^N \alpha^{i,*} $, and where $E_0, \cdots, E_2$ are given by the following PDE system in the region $(t,s,r) \in [0, T] \times [-\tau, 0] \times [-\tau, 0]$: 
\begin{equation*}
\begin{aligned} 
& E_0'(t)+\frac{\epsilon}{2} = 2\left(1-N^{-2}\right)\left(E_1(t, 0)+E_0(t)\right)^2+2 q\left(E_1(t, 0)+E_0(t)\right)+\frac{q^2}{2},  \\
&  \partial_t E_1(t, s) - \partial_s E_1(t, s) = 
2\left(1-N^{-2}\right)\left(E_1(t, 0)+E_0(t)+\frac{q}{2\left(1-N^{-2}\right)}\right)\left(E_2(t, s, 0)+E_1(t, s)\right),  \\
 & \partial_t E_2(t, s, r)- \partial_s E_2(t, s, r)-\partial_r E_2(t, s, r)  = 2\left(1-N^{-2}\right)\left(E_2(t, s, 0)+E_1(t, s)\right)\left(E_2(t, r, 0)+E_1(t, r)\right),
\end{aligned}
\end{equation*}
with boundary conditions given by
\begin{equation*}
\begin{aligned}
 & E_0(T)=\frac{c}{2}, \quad E_1(T, s)=0, \quad E_2(T, s, r)=0, \quad E_2(t, s, r)=E_2(t, r, s), \\
 & E_1(t,-\tau)=-E_0(t),  \quad E_2(t, s,-\tau)=-E_1(t, s). \\ 
\end{aligned}
\end{equation*}
\end{prop}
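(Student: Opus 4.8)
The plan is to treat the search for a closed-loop Nash equilibrium as a coupled family of single-player optimal control problems with delay, and to exploit the linear-quadratic structure to make a quadratic ansatz for each value function. First I would fix the strategies of all players $j \neq i$ at the candidate $\alpha^{j,*}$ and study the best-response problem for player $i$. Because the drift of $X^i$ in Eq.~\eqref{iblwd_eqn} depends on the delayed control $\alpha^i_{t-\tau}$, the Markovian state for this subproblem is not $X^i_t$ alone but the pair consisting of the current deviation $\bar X_t - X^i_t$ together with the segment of past controls $\{\alpha^i_s - \bar\alpha_s : s \in [t-\tau,t]\}$ still in the pipeline. I would therefore lift the problem to the Hilbert space $\mathbb{R} \times L^2([-\tau,0])$ in the spirit of Section 2.6.8 of \cite{SOCinfdim}, where the pipeline evolves under a translation (transport) semigroup, the current control enters at the $s=0$ edge, and the control leaving at $s=-\tau$ re-enters the drift of $\bar X - X^i$ with opposite sign.

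Next, motivated by the appearance of $E_0,E_1,E_2$, I would posit a value function that is quadratic in the lifted state,
\begin{equation*}
\begin{aligned}
V^i_t = {} & E_0(t)(\bar X_t - X^i_t)^2 + 2(\bar X_t - X^i_t)\int_{t-\tau}^t E_1(t,s-t)(\bar\alpha_s - \alpha^i_s)\,ds \\
& + \int_{t-\tau}^t\!\!\int_{t-\tau}^t E_2(t,s-t,r-t)(\bar\alpha_s - \alpha^i_s)(\bar\alpha_r - \alpha^i_r)\,ds\,dr + (\text{state-independent terms}),
\end{aligned}
\end{equation*}
and insert it into the dynamic programming equation. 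Applying It\^o's formula produces, besides the usual first- and second-order terms in $\bar X - X^i$, transport contributions $-\partial_s E_1$ and $-\partial_s E_2 - \partial_r E_2$ from the shifting of the pipeline. Minimizing the resulting Hamiltonian over $\alpha^i_t$ is a quadratic minimization whose first-order condition returns exactly the feedback claimed in the proposition: the prefactor $2(1-N^{-1})$ and the kernel $E_2(t,s-t,0)+E_1(t,s-t)$ emerge from differentiating the cross and pipeline terms and evaluating at the $s=0$ edge, while the closed-loop dependence of $\bar X$ and $\bar\alpha$ on player $i$'s own decision is what produces the $1-N^{-1}$ and $1-N^{-2}$ factors throughout.

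Substituting the minimizer back and separately collecting the coefficients of $(\bar X - X^i)^2$, of $(\bar X - X^i)(\bar\alpha_s - \alpha^i_s)$, and of $(\bar\alpha_s - \alpha^i_s)(\bar\alpha_r - \alpha^i_r)$ would yield the three equations for $E_0,E_1,E_2$; note their triangular structure, with $E_0$ solving a backward Riccati ODE, then $E_1$ a linear transport PDE driven by $E_0$, and $E_2$ a transport PDE driven by $E_1$. The boundary data follow naturally: $E_0(T)=\tfrac{c}{2}$ and $E_1(T,\cdot)=E_2(T,\cdot,\cdot)=0$ match the terminal cost $\tfrac{c}{2}(\bar X_T - X^i_T)^2$; the symmetry $E_2(t,s,r)=E_2(t,r,s)$ is automatic from the symmetric quadratic form; and the two \emph{delay-edge} conditions $E_1(t,-\tau)=-E_0(t)$ and $E_2(t,s,-\tau)=-E_1(t,s)$ arise precisely because the control retiring from the pipeline at $s=-\tau$ feeds back into the state dynamics through $-\alpha^i_{t-\tau}$. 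Since the running and terminal costs are convex in $\alpha^i$, the first-order condition is sufficient, and a standard verification argument, showing that $V^i_t$ plus the accumulated running cost is a submartingale under any admissible control and a martingale under $\alpha^{i,*}$, confirms optimality of the best response.

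I expect the main obstacle to be the rigorous treatment of the infinite-dimensional, delay-induced transport structure rather than the algebra: one must justify the lifted dynamic programming equation and, in particular, pin down the $s=-\tau$ boundary conditions encoding the feedback of retired controls into the state, together with the well-posedness of the nonlocal $E_0,E_1,E_2$ system. A secondary difficulty is closing the Nash fixed point, namely verifying that the best response computed for the representative player under the assumed symmetric profile reproduces the same structural feedback form, which is what forces the $1-N^{-2}$ corrections to be internally consistent across the three equations.
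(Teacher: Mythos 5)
The paper does not prove this proposition itself: it is restated verbatim from \cite{CarmonaFouqueMousaviSun}, whose derivation --- lifting the delayed game to an infinite-dimensional Hilbert space and characterizing the equilibrium via an HJB equation there with a quadratic ansatz in the current state and the pipeline of past controls --- is exactly the strategy you outline. Your sketch is consistent with that approach, including the origin of the transport terms, the delay-edge boundary conditions at $s=-\tau$, and the $1-N^{-1}$ and $1-N^{-2}$ factors, though as you acknowledge it remains a plan (well-posedness of the $E_0,E_1,E_2$ system and the verification argument are flagged but not carried out) rather than a complete proof.
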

\noindent This Nash equilibrium is derived in \cite{CarmonaFouqueMousaviSun} by first formulating the delayed problem as a stochastic differential game in an infinite-dimensional Hilbert space, and then characterizing the Nash equilibrium through Hamilton-Jacobi-Bellman equations over a Hilbert space of functions. A thorough discussion of this technique as well as the theory of infinite-dimensional stochastic control problems can be found in \cite{SOCinfdim}.

\section{The Deep Learning Algorithm}\label{section the discrete problem and algorithm}
In Section \ref{The_Mathematical_Problem}, we presented the mathematical formulation for the Nash equilibrium problem of a stochastic delay differential game as the collection of conditions \eqref{general_SDDG_dynamics}--\eqref{general_NE}. We remarked that Eqs.~\eqref{general_SDDG_dynamics} and \eqref{general_SDDG_cost} allow one to define a map from choices of controls to the corresponding expected costs of each player. With respect to this map, one can define the notion of solution as that of Nash equilibrium~\eqref{general_NE}, where one searches for such equilibrium for ``allowed'' controls in an admissible set given by Eq.~\eqref{admissible_sets}. To approach this problem numerically, we will need to approximate the map from controls to cost functions by discretizing the dynamics of the game and expected cost. Additionally, to make the space of controls numerically tractable, we will have to represent the controls in a finite-dimensional space that still respects the closed-loop structure demanded by the problem. To this end, we describe in Section~\ref{subsection the discrete problem} the discretized set-up that allows us to define a map from a finite-dimensional space of RNN-based controls to the numerically approximated expected costs of each player. Such a discretization scheme is straightforward, but we decide to include it for the sake of clarity and completeness. Building on Section~\ref{subsection the discrete problem}, which focuses on approximating the cost of a single player given a choice of RNN-based controls, Section~\ref{subSection numerical algorithm} is dedicated to the algorithm for games with delay. 
The algorithm we propose belongs to a broader methodology called deep fictitious play, introduced in \cite{DFP2,DFPSDG}. In Section \ref{subsubSection Deep Fictitious Play for approximating Nash equilibrium}, we provide a general discussion of the main ideas behind deep fictitious play and present the precise deep fictitious play algorithm we propose for solving stochastic delay differential games. Lastly, in Section \ref{Section lstm}, we highlight the specific choice of RNNs used in our implementation, namely the long short-term memory (LSTM) architecture.

\subsection{The Discrete Problem}\label{subsection the discrete problem}
We now consider the discrete analogue of the stochastic delay differential game defined by Eqs.~\eqref{general_SDDG_dynamics}--\eqref{general_SDDG_cost}. We represent the SDDE \eqref{general_SDDG_dynamics} numerically by its associated Euler Maruyama scheme, and the expected cost \eqref{general_SDDG_cost} is estimated with an empirical cost computed by the Monte Carlo method. Instead of searching for controls in the space given by Eq.~\eqref{admissible_sets}, we parameterize each player's control through a given RNN which helps to address the delayed aspect of the problem.

 The discrete analogue of the SDDE \eqref{general_SDDG_dynamics} is obtained using the Euler-Maruyama method, taking into account the delay. This is a natural choice for approximating SDDEs, and its convergence properties have been well-established in certain cases, for instance, Mao \cite{EulerMaruyamaAnalysisMao} addresses the case of a single pointwise delay. The discretization of the delay needs to be done on a case-by-case basis, and we will describe it later.
 
 For step size $\Delta t > 0$, we consider the partition of $[-\tau, T]$ given by $\{ t_k = k\Delta t :  -N_\tau \leq k \leq N_T, k \in \mathbb{Z} \}$, where $N_\tau = \frac{\tau}{\Delta t}$ and  $N_T = \frac{T}{\Delta t}$ are  integers without loss of generality\footnote{If the divisibility is not met, one may perturb $\Delta t$, $\tau$ and/or $T$ in order to ensure divisibility of $N_\tau = \frac{\tau}{\Delta t}, N_T = \frac{T}{\Delta t}$. Generality is still respected as the perturbations of each can be taken to be arbitrarily small.}. Then, we define the discrete approximation $(\hat{\bm{X}}_{k})_k$ through the Euler-Maruyama scheme
 \begin{equation}\label{general_numerical_SDDG_dynamics}
\begin{aligned}
     \hat{\bm{X}}_{k+1} & =   \ \hat{\bm{X}}_{k} 
    + \hat{\mu}(t_k, \hat{\bm{X}}_{k-N_{\tau}}, \cdots, \hat{\bm{X}}_{k},  \hat{\bm{\alpha}}_{{k-N_{\tau}}}, \cdots, \hat{\bm{\alpha}}_{k}) \Delta t  & \\
     &  +   \ \hat{\sigma}(t_k, \hat{\bm{X}}_{k-N_{\tau}}, \cdots, \hat{\bm{X}}_{k}, \hat{\bm{\alpha}}_{{k-N_{\tau}}}, \cdots, \hat{\bm{\alpha}}_{k}) \Delta \bm{W}_k,  \quad k = 0, \ldots, N_T-1, & \\
      \hat{\bm{X}}_{k} & =  \bm{\zeta}(t_k),  \quad  k = -N_{\tau}, \ldots, 0,  &  \\
     \hat{\bm{\alpha}}_{k} & =  \ \bm{\phi} (t_k), \quad k = -N_\tau, \ldots, -1,  &  \\
\end{aligned}
\end{equation}
where $\Delta \bm{W}_k = \bm{W}_{t_{k+1}} - \bm{W}_{t_k}$ from the original Brownian motion $\bm{W}$ in Eq.~\eqref{general_SDDG_dynamics}. The value for $\hat{\bm{\alpha}}_k = (\hat{\alpha}^1_k, \cdots, \hat{\alpha}^N_k)$ will be determined as the output of $N$ separate RNNs; each player's control is parameterized by their own RNN. 

We have also approximated the functionals $\mu$ and $\sigma$ occurring in the SDDE \eqref{general_SDDG_dynamics} with discrete counterparts $\hat{\mu}$ and $\hat{\sigma}$. While the functionals $\mu, \sigma$ are generic, there will be natural choices for their discrete counterparts in some of the common cases that we consider. For example, the problems occurring in Sections \ref{subSection the competition of portfolio managers with Delayed Tax Effects} and \ref{subSection the Competitive Consumption and Portfolio Allocation with Delayed Tax Effects problem} contain a delay variable given by an integral over the past history of the state process. This can be approximated by a numerical quadrature along the partition or through discretization of a separate ODE that produces this integral. The problem introduced in Section~\ref{subSection Inter-Bank Lending Model} contains a delay variable in the form of pointwise evaluation of the control at a time $t-\tau$. This case is easily dealt with because the delay evaluation occurs on the partition as we have that $\tau$ and $T$ are both divisible by $\Delta t$.

Having defined the discrete approximation to the SDDE, the numerical approximation of the expected cost for player $i$ is given by
\begin{equation}\label{general_numerical_SDDG_cost}
    \hat{J}^i[\hat{\bm \alpha}] = \frac{1}{N_\mathrm{batch}} \sum_{\ell=1}^{N_\mathrm{batch}}  \left[ \sum_{k=1}^{N_T} f^i(t_k,\hat{\bm{X}}_k(\omega_\ell),\hat{ \bm  \alpha}_k)\Delta t + g^i(\hat{\bm{X}}_{N_T}(\omega_\ell))   \right],
\end{equation}
which is computed by taking $N_\mathrm{batch}$ samples of the discrete Brownian paths given by $\{ (\Delta \bm{W}_k(\omega_\ell))_{k = 0}^{N_T - 1} : \omega_\ell \in \Omega\}_{\ell = 1}^{N_\mathrm{batch}}$, producing the realized trajectories $(\bm{\hat{X}}_k(\omega_\ell))$ through the discrete dynamics \eqref{general_numerical_SDDG_dynamics}.

 We know that in the continuous version, each player's control lives in the admissible set $\mathbb{A}^i$ \eqref{admissible_sets}, which contains suitable $\mathcal{F}_t^X$-progressively measurable strategies, i.e, closed-loop controls. To capture this closed-loop aspect in the discrete case, we will require that $\hat{\bm\alpha}_k = (\hat{\alpha}^1_k, \cdots, \hat{\alpha}^N_k)$ be given as outputs of functions of the past history of the state space, $(\hat{\bm X}_{k'})_{k' \leq k}$. In particular, each player's strategy will be given through the outputs of an RNN of a fixed architecture. 
 
 The concept of RNN was first introduced by Rumelhart, Hinton, and Williams~\cite{rumelhart1986learning}, a work that demonstrates the implementation of the backpropagation algorithm of a neural network that includes hidden units. In our case, the RNN structure naturally allows the control for player $i$ to encapsulate the past history of the state process. We denote the RNN characterizing the actions of player $i$ by $\phi_{RNN}(\cdot ; \vartheta_i)$, where $\vartheta_i$ represents the parameters of the RNN for player $i$. Specifically, player $i$'s control at time $t_k$ for the discretized problem is given as a function of the input sequence $(t_{-N_\tau} , \hat{\bm X}_{-N_\tau} ), \cdots,  (t_{k} , \hat{\bm X}_{k})$, or
 \begin{equation}\label{alphaAsRNN}
     \hat{\alpha}^i_k = \phi_{RNN} \left( (t_{-N_\tau} , \hat{\bm X}_{-N_\tau} ), \cdots,  (t_{k} , \hat{\bm X}_{k}) ; \vartheta_i  \right).
 \end{equation} 
The map $\phi_{RNN}$ in Eq.~\eqref{alphaAsRNN} inputs a sequence of arbitrary length by defining it through a recurrence relation. Specifically, the recurrence is on a map we call the RNN cell, or $\phi_{RNNcell}$. The recurrence occurs through a secondary output of the RNN cell called the hidden state, which we label as $h^i$ for player $i$. In our case, the recurrence starts with an initial value for $h^i_{-N_{\tau}}$ by some specific choice $h_{init}$. Then, we define the recurrence relation
 \begin{equation}   
\begin{aligned}\label{RNNcell}
 y^i_k , h^i_k & =  \phi_{RNNcell}(  t_k, \hat{\bm X}_{k}, h^i_{k-1} ; \vartheta_i),
 \quad & k= - N_\tau + 1 , \cdots, N_T,  \\
  \hat{\alpha}^i_k  & =  y^i_k,
 \quad & k= 0 , \cdots, N_T - 1,  \\
 h^i_{-N_\tau} & = h_{init}. & \\
\end{aligned}
\end{equation}
The time-$k$ map of this recurrence relation defines a map from $ \left((t_{-N_\tau} , \hat{\bm X}_{-N_\tau} ), \cdots,  (t_{k} , \hat{\bm X}_{k})  \right) \mapsto y^i_k = \hat{\alpha}^i_k$, which is precisely the map we call $\phi_{RNN}$ in Eq.~\eqref{alphaAsRNN}. We remark that the hidden states $h^i_k$ will correspond to each player $i$ as they are dependent on the parameters $\vartheta_i$. 

The key observation is that taking the control to be given by the RNN defined by Eqs.~\eqref{alphaAsRNN}--\eqref{RNNcell} provides us with a reasonable space to approximate the closed-loop controls in Eq.~\eqref{admissible_sets}. For one, we see that $\hat{\alpha}^i_k$, the discrete control output at time $t_k$, now depends on the past trajectory of $\hat{\bm X}$ up to and including time $t_k$, which encapsulates the closed-loop property. This also addresses the impact of the delay as the control at time $t_k$ has memory of past events. At the same time, the dimension of the search space of the control for each player $i$ is reduced to the finite dimension $\dim (\vartheta_i) < \infty$, which allows for tractability of the Nash equilibrium problem as we will see in Section \ref{subsubSection Deep Fictitious Play for approximating Nash equilibrium}. 

In essence, each player $i$ will select their desired neural network parameters $\vartheta_i$, determining their choice of control. With each player's control chosen, the recurrence relations given by both Eq.~\eqref{general_numerical_SDDG_dynamics}, \eqref{RNNcell} are then iterated together, which produces simulated dynamics for $\hat{\bm X}$. With these simulated dynamics, one can compute the empirical costs for each player $(\hat{J}^i)_{i=1}^N$. This defines a map from ``controls'' given by the choices of parameters $(\vartheta_i)_{i=1}^N$ to the empirical costs $(\hat{J}^i)_{i=1}^N$, which will allow us to proceed in Section \ref{subsubSection Deep Fictitious Play for approximating Nash equilibrium}
 with a deep fictitious play algorithm for approximating the Nash equilibrium.

\subsection{The Numerical Algorithm} \label{subSection numerical algorithm}

\subsubsection{Deep Fictitious Play for Approximating Nash Equilibrium} \label{subsubSection Deep Fictitious Play for approximating Nash equilibrium}

Deep fictitious play is a broad technique introduced in \cite{DFP2,DFPSDG} whereby Nash equilibrium controls are approximated iteratively via deep learning techniques in a manner akin to Brown's fictitious play~\cite{FP}. The primary reason for using deep learning is the high dimensionality in the problem being solved in each iterative step of fictitious play when one has a stochastic differential game.

In general, we consider a game defined by a map from choices of controls $\bm \alpha \in \mathbb{A}$ into costs $\bm J[\bm \alpha]$. The idea of fictitious play is to fix all but one player's control, which leads to the decoupled optimization problems
\begin{equation}\label{fp_opt_eq}
    \inf_{\beta^i \in \mathbb{A}^i} J^i[\alpha^{1}, \cdots, \alpha^{i-1}, \beta^i, \alpha^{i+1}, \cdots, \alpha^{N}],
\end{equation}
and then we iterate over the solutions to these optimization problems. 

Assuming a unique minimum occurs at $\beta^{i,*}$, we use $\beta^{i,*}$ to inform $\alpha^{i}$ in future iterations of the optimization \eqref{fp_opt_eq}. Doing this for each player $i \in \{1, \cdots, N\}$ constitutes one round of this modified fictitious play. In the case of Brown's fictitious play, $\alpha^{j}$ in the optimization problem~\eqref{fp_opt_eq} would be given by the empirical average of player $j$'s control taken over the previous rounds of play. However, for deep fictitious play methodologies, we will usually take $\alpha^{j}$ to be the exact control from the previous round of play for memory efficiency reasons (see \cite[Remarks~3.1 and 3.2]{DFP2} for more information).

In this approach, we choose $N_{\mathrm{stages}}$ to be the number of stages of fictitious play. Player $i$ at stage $s$ selects her best response given that all other players are using their strategies from the previous round. This leads to the theoretical Algorithm \ref{math_alg} below. 

\begin{algorithm}[H]
\caption{Modified Fictitious Play}\label{math_alg}
\begin{algorithmic}[1]
\State{Initialize each $\alpha^{i,0} \in \mathbb{A}^i$}.
\For{$s$ in $1$ to $N_{\mathrm{stages}}$}
    \For{$i$ in $1$ to $N$}
        \State{$\alpha^{i,s} = \arg \min_{\beta^i \in \mathbb{A}^i} J^i[\alpha^{1,s-1}, \cdots, \alpha^{i-1,s-1}, \beta^i, \alpha^{i+1,s-1}, \cdots, \alpha^{N,s-1}]$ }
    \EndFor
\EndFor
\end{algorithmic}
\end{algorithm}
 In Algorithm \ref{math_alg}, $\alpha^{i,s}$ is the control for player $i$ at stage $s$, and we are assuming that the  minimizer exists and is unique. If it is not unique, we could choose a particular minimizer. The idea of  Algorithm \ref{math_alg} is that the Nash equilibria are  characterized precisely by the fixed points of this iteration. However, the convergence of the method outlined by Algorithm \ref{math_alg} is done on a case-by-case basis. For example, in \cite{DFPSDG} it is shown that Algorithm \ref{math_alg} converges for a Linear-Quadratic stochastic differential game. 

Next, Algorithm \ref{math_alg} is purely theoretical as it assumes the solution to the optimization problem \eqref{fp_opt_eq}. In reality, \eqref{fp_opt_eq} may be difficult to directly solve due to high dimensionality and may be best approached by deep learning techniques. There are several methods we may take to solve \eqref{fp_opt_eq} via deep learning and any of these would be considered deep fictitious play.  Namely, the approach in \cite{DFP2} considers the Hamilton-Jacobi-Bellman (HJB) equation given by the decoupled optimal control problem defined by the optimization problem~\eqref{fp_opt_eq}. The HJB solution can be framed in terms of a system of backward SDEs (BSDEs) which can be solved via the Deep BSDE method introduced by E, Han, and Jentzen \cite{deepBSDE1, deepBSDE2}. Alternatively, the solution of the HJB equation could be approximated with the Deep Galerkin method introduced by Sirignano and Spiliopoulos \cite{DGM1}.

In the case of stochastic delay differential games, the associated HJB equation would be infinite-dimensional \cite{SOCinfdim}. Because of this, we use a third approach to solve the optimization problem~\eqref{fp_opt_eq} that is based on the so-called direct parametrization as discussed by Han and E in \cite{han2016deep}. This approach was originally introduced for stochastic control problems but can be extended into the game setting via the iteration in Algorithm \ref{math_alg} as demonstrated in \cite{DFPSDG}. In this method, we approach the optimization problems in Algorithm \ref{math_alg} with neural networks. This is done by taking the control $\beta^i$ to be a neural network and the optimization is done with gradient descent using the cost function $J^i$ as the loss.

In our case, there are a few steps that must be computationally approximated. First, we must approximate the true expected cost $J^i$ with its empirical, discrete counterpart $\hat{J}^i$, Eq.~\eqref{general_numerical_SDDG_cost}. This is computed under controls given by the RNNs, $(\phi_{RNN}(\cdot, \vartheta_1), \cdots, \phi_{RNN}(\cdot, \vartheta_N))$, which give the choices of controls for each player defined through Eqs.~\eqref{alphaAsRNN}--\eqref{RNNcell} as discussed in Section \ref{subsection the discrete problem}. This leads us to Algorithm \ref{Deep fictitious play alg direct param} which is the basis of our proposed numerical method.

\begin{algorithm}[H]
    \caption{A Deep Fictitious Play Algorithm via Direct Parametrization}\label{Deep fictitious play alg direct param}
    \begin{algorithmic}[1]
    \State{Initialize each $\vartheta_{1,0}, \cdots, \vartheta_{N,0}$ which are the respective parameters of the $N$ different RNNs at stage 0.} 
    \State{Select $N_{\mathrm{stages}}$ of deep fictitious play based on the computational budget.}
    \For{$s$ in $0$ to $N_{\mathrm{stages}}$}
        \For{$i$ in $1$ to $N$}
            \State{Compute $N_\mathrm{batch}$ trajectories $\bm{\hat{X}}$ of the numerical SDDE \eqref{general_numerical_SDDG_dynamics} under the given controls $(\hat{\alpha}^{j,s})_j$,
            
            \quad where the controls $\hat{\alpha}^{j,s} = \phi_{RNN}(\cdot ; \vartheta_{j,s})$ are defined by Eqs.~\eqref{alphaAsRNN}--\eqref{RNNcell} for each player $j$.}
            \State{Compute the numerical cost $\hat{J}^i$  from Eq.~\eqref{general_numerical_SDDG_cost}}.
            \State{Compute via automatic differentiation $\nabla_{\vartheta_{i,s}} \hat{J}^i$ .}
            \State{Do a gradient descent step or similar (e.g. Adam)  on $\vartheta_{i,s}$ with learning rate $l_r$, i.e.
            
            \quad $\vartheta_{i,s+1} = \vartheta_{i,s} - l_r \nabla_{\vartheta_{i,s}} \hat{J}^i$. }
            
        \EndFor
    \EndFor
    \end{algorithmic}
\end{algorithm}

\noindent As before, we consider $N_{\mathrm{stages}}$ of iteration, where $\phi_{RNN}(\cdot, \vartheta_{j,s})$ parameterizes the control for player $j$ at stage $s$ as selected by the neural network parameters $\vartheta_{j,s}$. As motivated by Algorithm \ref{math_alg}, we would then like to compute the optimal $\vartheta_{j,s}$ holding the other RNN parameters, $(\vartheta_{j',s})_{j' \neq j}$, fixed.  In practice,  we  do  a gradient descent step or a sequence of gradient descent steps to approximate this behavior. Of course, to do this, we will have to compute the $\vartheta_{i,s}$-gradient of $\hat{J}^i$. This is possible numerically through automatic differentiation \cite{automatic_diff_reference}. Also, instead of standard gradient descent, we choose to use the Adam optimization which adaptively chooses the learning rate based on the mean and variance of the gradients involved in the computation of the loss. The choice of using the Adam optimization over traditional gradient descent is due to it having improved convergence properties in many cases \cite{adam_reference}. This gives us the deep fictitious play algorithm  shown above in Algorithm \ref{Deep fictitious play alg direct param}.

To better approximate the argument minimizer in Algorithm \ref{math_alg}, several gradient steps might be needed. However, this would require a new computation of $\hat{J}^i$ and its gradient with respect to the updated RNN parameters after each gradient descent step, leading to increased costs.\footnote{One possibility would be to incorporate having additional gradient descent steps for a single player before moving onto the next when one is at later stages of play. The idea is that at the beginning stages, since the approximate controls $(\hat{\alpha}^{i,s})_i$ are not yet close to the Nash equilibrium, it is not as important to fully optimize a single player given the choices of others since the other players are not yet close enough to the Nash equilibrium. However, at later stages, as the Nash equilibrium is approached, it may be beneficial to more fully optimize $\hat{\alpha}^{i,s}$ given the choices of the other players. } Instead, in Algorithm \ref{Deep fictitious play alg direct param}, we move on immediately to the next player's optimization after a single gradient descent step of the current player. For this reason, Algorithm \ref{Deep fictitious play alg direct param} is not meant to be a perfect numerical analogue of Algorithm \ref{math_alg}, but is instead merely based on it. Importantly,  Algorithm \ref{Deep fictitious play alg direct param} still reflects the property that Nash equilibria are fixed points of the iteration from an intuitive point of view.\footnote{Of course it is exceedingly unlikely that controls of the form $(\phi_{RNN}(\cdot,\vartheta^{i,*}))_i$ happen to be a Nash equilibrium due to it being a finite-dimensional object in an infinite-dimensional space. However, if it so happens to be the case that $(\phi_{RNN}(\cdot,\vartheta^{i,*}))_i$ is a Nash equilibrium for $\hat{\bm{J}}$, and assuming sufficient smoothness of $\hat{\bm J}$, then for each $i$ it holds that $\nabla_{\vartheta^{i,*}} \hat{J}^i [(\phi_{RNN}(\cdot,\vartheta^{j,*}))_j] = 0$, and therefore $(\vartheta^{i,*})_{i=1}^N$ is a fixed point of the iteration. }

\subsubsection{The Long Short-Term Memory Recurrent Network}\label{Section lstm}
 Algorithm \ref{Deep fictitious play alg direct param} showcases the numerical method we use to approximate the Nash equilibrium controls of stochastic delay differential games using RNN-based controls. It only remains to specify a precise form of the RNNs as we have presented these networks quite broadly through Eqs.~\eqref{alphaAsRNN}--\eqref{RNNcell}. Motivated by the implementation in \cite{RNNSCPwD}, we choose to use the specific RNN architecture given by the so-called long short-term memory (LSTM).

The LSTM was first introduced by Hochreiter and Schmidhuber \cite{LSTM} and was built to effectively handle the vanishing gradient problem. The LSTM will have two variables that both play the role of the hidden state of an RNN demonstrated in Eq.~\eqref{RNNcell}. Confusingly, one is called the hidden state $h$ and the other is the cell state $c$, although we will see they are both defined recurrently and therefore act as hidden states with respect to the generic RNN architecture we have defined by Eq.~\eqref{RNNcell}. The map $\phi_{LSTM}$ maps an input vector $(x_0, \cdots, x_k)$ of arbitrary length to an output, hidden, and cell state through a recursive dependence on its previous outputs of the previous input $(x_0, \cdots, x_{k-1})$. The recurrence is given through a function called the LSTM cell, which we will denote $\phi_{LSTMcell}$. In particular $\phi_{LSTMcell}$ directly maps the inputs $(x_k, c_{k-1}, h_{k-1})$ to the outputs $c_k, h_k, y_k$ according to
\begin{equation}\label{lstmcell} 
\begin{aligned}
 & i_k =  \sigma(W_i x_k + U_i h_{k-1} + b_i),  \\
 & f_k = \sigma(W_f x_k + U_f h_{k-1} + b_f) , \\
 & o_k =  \sigma(W_o x_k + U_o h_{k-1} + b_o) ,\\
  & c_k = f_k \odot c_{k-1} + i_k \odot \tanh(W_c x_k + U_c h_{k-1} + b_c), \\
  & h_k = o_k \odot  \tanh(c_k), \\
  & y_k = W_y h_k + b_y.\\
\end{aligned}
\end{equation}
 The individual mappings within the LSTM cell to $i_k, f_k,$ and $o_k$ are known as the input, forget, and output gate respectively. Denoting the input dimension, $\dim(x_k)$, to be $N_{input}$, and denoting the hidden dimension $N_{hidden}$ for the size of each $i_k, f_k, \cdots, h_k$, we see that each matrix $W_i, W_f, \cdots, W_c$ is of size $N_{hidden} \times N_{input}$ and the bias vectors $b_i, b_f, \cdots, b_c$ are of size $N_{hidden}$. The final output is $y_k \in \mathbb{R}^{N_{output}}$, so $W_y$ is in $\mathbb{R}^{N_{output} \times N_{hidden}}$ and $b_y$ is in $\mathbb{R}^{N_{output}}$. For player $j$, the LSTM cell map, $\phi_{LSTMcell}(\cdot;\vartheta_j)$, is determined by the choice of parameters $\vartheta_j = (W^j_i, \cdots, W^j_y, U^j_i, \cdots, U^j_c, b^j_i, \cdots, b^j_y)$, representing the weight matrices and bias vectors in Eq.~\eqref{lstmcell} specifically for player $j$.

With the cell-map, $\phi_{LSTMcell}$, specified by Eq.~\eqref{lstmcell}, the choice of controls in Eq.~\eqref{general_numerical_SDDG_dynamics} is determined by taking player $j$'s control at time $t_k$ to be
\begin{equation*}
    \hat{\alpha}^j_k = \phi_{LSTM}\left((-\tau, \hat{\bm X}_{-N_\tau}), \cdots,  (t_k,\hat{\bm X}_{k}) ; \vartheta_j \right),
\end{equation*}
where this mapping $\phi_{LSTM}(\cdot ; \vartheta_j)$ is given by the forward iteration of the recurrence relation
\begin{equation}\label{lstm_eq}
\begin{aligned}
  x_k & = (t_k, \hat{\bm{X}}_k), \quad & k= - N_\tau , \cdots, N_T,\\
  y^j_k, c^j_k, h^j_k & =  \phi_{LSTMcell}(x_k, h^j_{k-1}, c^j_{k-1} ; \vartheta_j), \quad & k= - N_\tau +1 , \cdots, N_T-1,  \\
    \hat{\alpha}^j_k & =  y^j_k, \quad & k = 0 , \cdots, N_T-1. \\
\end{aligned}
\end{equation} 
 In the cases where the dimension of the state process is equal to the number of players (i.e., $n=N$), we will choose $h^j_{-N_\tau } = c^j_{-N_\tau} = (\hat{X}^j_0, 0, \cdots, 0) \in \mathbb{R}^{N_{hidden}}$ to start the forward iteration, following the implementation in \cite{RNNSCPwD}. Note that $x_k = (t_k, \hat{\bm{X}}_k)$ is a vector in $\mathbb{R}^{1+n}$, as $\hat{\bm{X}}_k$ is a vector in $\mathbb{R}^n$ for each $k$. This means that while the input dimension is fixed $N_{input} = 1+n$, one is free to choose the size of the hidden dimension, $N_{hidden}$, depending on the user's desired size of the network. In our implementation, we choose $N_{hidden} = 2^6$.

\subsubsection{Implementation Details}
For computational efficiency, we may not be inputting a single sample of $(t_k, \hat{\bm{X}}_k)$ into the LSTM as indicated by Eq.~\eqref{lstm_eq}, but rather a so-called ``batch'' which contains $N_\mathrm{batch}$ paths of $\hat{\bm{X}}$ determined by respective $N_\mathrm{batch}$ samples of Brownian paths. 

Precisely, we can augment the operations in the Euler-Maruyama method \eqref{general_numerical_SDDG_dynamics} so that it iterates over a batch $(\hat{\bm{X}}_k(\omega_\ell))_{\ell=1}^{N_\mathrm{batch}}$ which is represented as a matrix in $\mathbb{R}^{n \times N_\mathrm{batch} }$. This is done by generating $N_\mathrm{batch}$ samples of the Brownian increments $({\Delta \bm{W}}_k(\omega_\ell))_{\ell=1}^{N_\mathrm{batch}}$. The drift $\hat{\mu}$ and volatility $\hat{\sigma}$ are extended to act pointwise across the batch dimension.

The control given by the neural network must also be able to provide the respective outputs for each sample of $\hat{\bm{X}}$ along the batch dimension by acting pointwise across the batch dimension. Note that a generic linear layer $x \mapsto Wx + b$ extends to the mapping  $(x_1, \cdots, x_{N_\mathrm{batch}})  \mapsto W(x_1, \cdots, x_{N_\mathrm{batch}}) + (b, \cdots, b)$, with the property that $x_i \mapsto W x_i + b$. Because of this, we see that the map $\phi_{LSTMcell}$ in Eq.~\eqref{lstmcell} naturally acts pointwise along the batch dimension. To have the LSTM defined by Eqs.~\eqref{lstmcell}--\eqref{lstm_eq} extended to act pointwise on the batch, we will take the input vector $x_k$ to be
\begin{equation*}
    x_k = (t_k, \hat{\bm{X}}_k(\omega_\ell))_{\ell=1}^{N_\mathrm{batch}} \in \mathbb{R}^{(1 + n) \times N_\mathrm{batch} }.
\end{equation*}
We notice that this will imply that $h^j_{k},c^j_{k}$ in Eq.~\eqref{lstm_eq} must also be tensorized along this batch dimension and we will have $c_k, h_k \in \mathbb{R}^{N_{hidden} \times N_\mathrm{batch}}$. For the operations in Eq.~\eqref{lstmcell} to act on a batch, we will have to resize the bias vectors $b_i, \cdots, b_c \in \mathbb{R}^{N_{hidden}}$ to be of size $\mathbb{R}^{N_{hidden} \times N_\mathrm{batch}}$ by repeating their original values across the batch dimension. The matrices $W_i, \cdots, W_c$ will remain the same size of $\mathbb{R}^{N_{hidden} \times (1+n)}$. 

The end result is the ability to work with the map from the choice of controls $(\phi^1_{LSTM}(\cdot, \vartheta_1), \cdots, \phi^1_{LSTM}(\cdot, \vartheta_N))$ to the cost function $\hat{J}^i$ in Eq.~\eqref{general_numerical_SDDG_cost} in a tensorized form.
 From a computational perspective, we avoid looping over each sampled trajectory to compute the numerical cost function \eqref{general_numerical_SDDG_cost}. This tensorization is especially useful when working with automatic differentiation supported libraries such as PyTorch or TensorFlow, as these tensorized operations can be easily and automatically parallelized \cite{PyTorchRef,TFRef}.

\section{Numerical Results}\label{Section Numerical results}

In Section \ref{Section benchmark examples}, we have presented three  stochastic delay differential games with analytical formulas of their closed-loop Nash equilibrium. In each case, we now numerically approximate the closed-loop Nash equilibrium for $N=10$ players via our proposed numerical method,  Algorithm \ref{Deep fictitious play alg direct param} in Section \ref{subSection numerical algorithm}.

We introduce below in Section \ref{Numerical Results Methodology} the precise details of our numerical experiments that serve as a reference point for the construction of the plots shown in Sections \ref{subSection numerical Merton no cons}--\ref{subSection numerical inter-bank}, the parameter values used for each of these problems, and an important implementation detail for the problems with infinite delay. In Sections \ref{subSection numerical Merton no cons}--\ref{subSection numerical inter-bank} we present and interpret the numerical results for each of the considered problems.

\subsection{Numerical Results Methodology}\label{Numerical Results Methodology}

\subsubsection{Costs/Rewards over Training}\label{cost graph method}
For typical machine learning problems, one usually has a training curve-- a plot of the loss function over the course of training, which serves as an initial gauge of the effectiveness of training. While the loss function for each player can be seen through the player's empirical cost \eqref{general_numerical_SDDG_cost}, the controls are meant to approximate a Nash equilibrium;  the trajectory of each player's loss function over the course of training is not an appropriate measure of the effectiveness of training in this case. The impact of training can be better seen through the relative error of these costs under the LSTM controls to that corresponding to the true Nash equilibrium controls. 

Therefore, for every 20 rounds of deep fictitious play (DFP) within Algorithm \ref{Deep fictitious play alg direct param}, we compute the 2-norm relative error:
\begin{equation}\label{2 norm relative error}
    \mathrm{ Relative \ 2 \text{-} Norm \ Error} = \frac{|| \hat{\bm J}[{\bm \phi}_{LSTM}] -  \hat{\bm J}[\bm{\alpha}^*] ||_2}{||  \hat{\bm J}[\bm{\alpha}^*] ||_2},
\end{equation}
 where $\hat{\bm J}[{\bm \phi}_{LSTM}]$ and $\hat{\bm J}[\bm{\alpha}^*]$ respectively are the vectors containing the empirical cost for each player under the LSTM controls $(\phi^1_{LSTM}, \cdots, \phi^N_{LSTM})$ defined in Eqs.~\eqref{lstmcell}-- \eqref{lstm_eq} and the true Nash equilibrium controls $(\alpha^{*1} , \cdots, \alpha^{*N})$ of the mathematical problem defined by Eqs.~\eqref{general_SDDG_dynamics},\eqref{general_SDDG_cost}. We then plot this relative 2-norm error as it evolves over the course of training for each of the problems we consider throughout Sections \ref{subSection numerical Merton no cons}--\ref{subSection numerical inter-bank}.

\subsubsection{Comparison of State and Control Trajectories}\label{trajectory graph method}

After training, we have the collection of LSTM control functions for each player, $(\phi_{LSTM}(\cdot;\vartheta_i))_{i=1}^N$. We demonstrate the ability of these surrogate functions to approximate the true Nash equilibrium controls by comparing the trajectories of both control and state processes under both the LSTM and true Nash equilibrium controls for a given sample of Brownian motion.

This is done by selecting a single realization of the discrete Brownian motion's path, $(\Delta \bm{W}_k(\omega)_{k = 0}^{N_T - 1}$, and with this given noise simulate the discretized dynamics \eqref{general_numerical_SDDG_dynamics} once under the Nash equilibrium controls and again under the LSTM controls. We then compare the dynamics given the two different choices of controls. The plots of these dynamics are shown for each problem occurring throughout Sections \ref{subSection numerical Merton no cons}--\ref{subSection numerical inter-bank}. Each player is distinguished with a given color, while solid and  dashed lines correspond to the dynamics under LSTM controls and Nash equilibrium controls respectively. Lastly, while we have performed the training for 10 players to demonstrate the methodologies' ability to handle larger games, we will only plot 5 out of 10 players’ trajectories for the sake of visual clarity.

\subsubsection{Model Parameters}

The model parameters chosen for the numerical experimentation of each problem are shown below. We express a dependency on $i$ for parameters specific to player $i$, e.g. $\delta_i = 0.3 + \frac{4}{9}(i-1)$ in Table \ref{table param CARA no consump} is player $i$'s risk tolerance parameter. Here, we are still using the indexation $i \in \{1, \cdots, N = 10\}$.
\begin{table}[H] 
\caption{Parameters for CARA case of competition between portfolio managers with delayed tax effects.}\label{table param CARA no consump}
\centering
\begin{tabular}{| c || c | c | c | c | c | c | c | c | c | c | }
\hline
Parameter & $N$ & $T$ & $\mu_1$ & $\sigma$ & $r$ & $\lambda$ & $\mu_2$ & $\delta_i$ & $\theta_i$  & $X^i_{(-\infty,0]} = x^i_0$ \\
\hline
Value & 10 & 10.0 & 0.08 & 0.2 & 0.04 & 2.0 & 0.01 & $0.3 + \frac{4}{9}(i-1)$ & $0.3 + \frac{4}{9}(i-1)$ &  $2 + \frac{1}{10}(i-1)$    \\
\hline
\end{tabular}
\end{table}


\begin{table}[H]
\caption{Parameters for CRRA case of competition between portfolio managers with delayed tax effects.}\label{table param CRRA no consump}
\centering
\begin{tabular}{| c || c | c | c | c | c | c | c | c | c | c | }
\hline
Parameter & $N$ & $T$ & $\mu_1$ & $\sigma$ & $r$ & $\lambda$ & $\mu_2$ & $\delta_i$ & $\theta_i$  & $X^i_{(-\infty,0]} = x^i_0$ \\
\hline
Value & 10 & 1.0 & 0.08 & 0.2 & 0.04 & 1.0 & 0.2 & $0.3 + \frac{4}{9}(i-1)$ & $0.3 + \frac{4}{9}(i-1)$ &  $1 + \frac{1}{20}(i-1)$    \\
\hline
\end{tabular}
\end{table}


\begin{table}[H]
\caption{Parameters for consumption and portfolio allocation game with delayed tax effects.} \label{table param CRRA w consump}
\centering
\begin{tabular}{| c || c | c | c | c | c | c | c | c | c | c | c | }
\hline
Parameter & $N$ & $T$ & $\mu_1$ & $\sigma$ & $r$ & $\lambda$ & $\mu_2$ & $\delta_i$ & $\theta_i$  & $X^i_{(-\infty,0]} = x^i_0$ & $\epsilon_i$ \\
\hline
Value & 10 & 2.0 & 0.08 & 0.2 & 0.04 & 1.0 & 0.01 & $0.3 + \frac{4}{9}(i-1)$ & $0.3 + \frac{4}{9}(i-1)$ &  $1 + \frac{1}{20}(i-1)$ & 50.0   \\
\hline
\end{tabular}
\end{table}
\begin{table}[H]
\caption{Parameters for inter-bank lending model.} \label{table iblwd params}
\centering
\begin{tabular}{| c || c | c | c | c | c | c | c | c | }
\hline
Parameter & $N$ & $T$ & $\sigma$ & $q$ & $\epsilon$ & $c$ & $\tau$  & $X^i_0 = \xi^i$ \\
\hline
Value & 10 & 1.0 & .05 & 1.0 & 2.0 & 0.25 & 0.25 & $1+0.1 \cdot 1.15^{i-1}$\\
\hline
\end{tabular}
\end{table}
\noindent Note that in Tables \ref{table param CARA no consump}--\ref{table param CRRA w consump}, we write $X^i_{(-\infty,0]} = x^i_0$, indicating the initial path for each player is taken to be constant.

\subsubsection{Approaching the Infinite Delay Cases}

We remark on an approximation used in the infinite delay cases appearing in the problems in Sections \ref{subSection the competition of portfolio managers with Delayed Tax Effects} and \ref{subSection the Competitive Consumption and Portfolio Allocation with Delayed Tax Effects problem} whose numerical results we display in  \ref{subSection numerical Merton no cons} and \ref{subSection numerical Merton with cons}. In both cases, the delay is contained through the variable $Y^i_t = \int_{-\infty}^t \lambda e^{-\lambda(t- s)} X^i_{s} \ud s$. 

While one option is to truncate the delay resulting in a truncated integral for which a standard numerical integration approach can be applied, this is not necessary. The reason is that in each of these problems, one can show $Y^i_t$ satisfies the ODE relation $\ud Y^i_t = \lambda (X^i_t - Y^i_t) \ud t$, allowing us to approximate $\bm Y_t$ through the forward Euler discretization. Moreover, when the initial path $X^i_{(-\infty,0]}$ is constant, we can easily see that $Y^i_0 = X^i_0$, which greatly simplifies the discrete SDDE iteration by altogether avoiding integration, and therefore the truncation of $\tau = \infty$ is no longer important for $\bm Y_t$. However, we still must impose some finite truncation to the delay $\tau$ in order for the forward iteration of the LSTM as described in Eq.~\eqref{lstm_eq} to be initialized at some finite $-N_\tau$. For the numerical results shown in both Sections \ref{subSection numerical Merton no cons} and \ref{subSection numerical Merton with cons}, we have used $\tau = 1.0$ as the truncation for the infinite delay.

\subsection{Results for Competition between Portfolio Managers with Delayed Tax Effects}\label{subSection numerical Merton no cons}

\subsubsection{CARA Case}\label{numerical results CARA no consump}

We consider now the problem of competition between portfolio managers with delay tax effects, Eq.~\eqref{XdyanmicsMertonNoCons}, in the CARA case where the rewards are given by Eqs.~\eqref{costMertonNoConsCARA1}--\eqref{a and Z_disc}. In our numerical experiment, we select the parameter values as shown in Table \ref{table param CARA no consump}.

For this problem, we lower the learning rate throughout training by taking it to be $10^{-2}$ for the first 500 rounds of DFP, $10^{-3}$ for the next 500, and $10^{-4}$ for the remaining 700 rounds. The impact of this training on the approximation of the true Nash equilibrium rewards is shown in Figure \ref{figure:TrainingEvol_NoConsCARA} as explained in Section \ref{cost graph method}.

\begin{figure}[H]
    \centering
    \centerline{\includegraphics[width=.55\textwidth, trim={0 2em 0 0},clip]{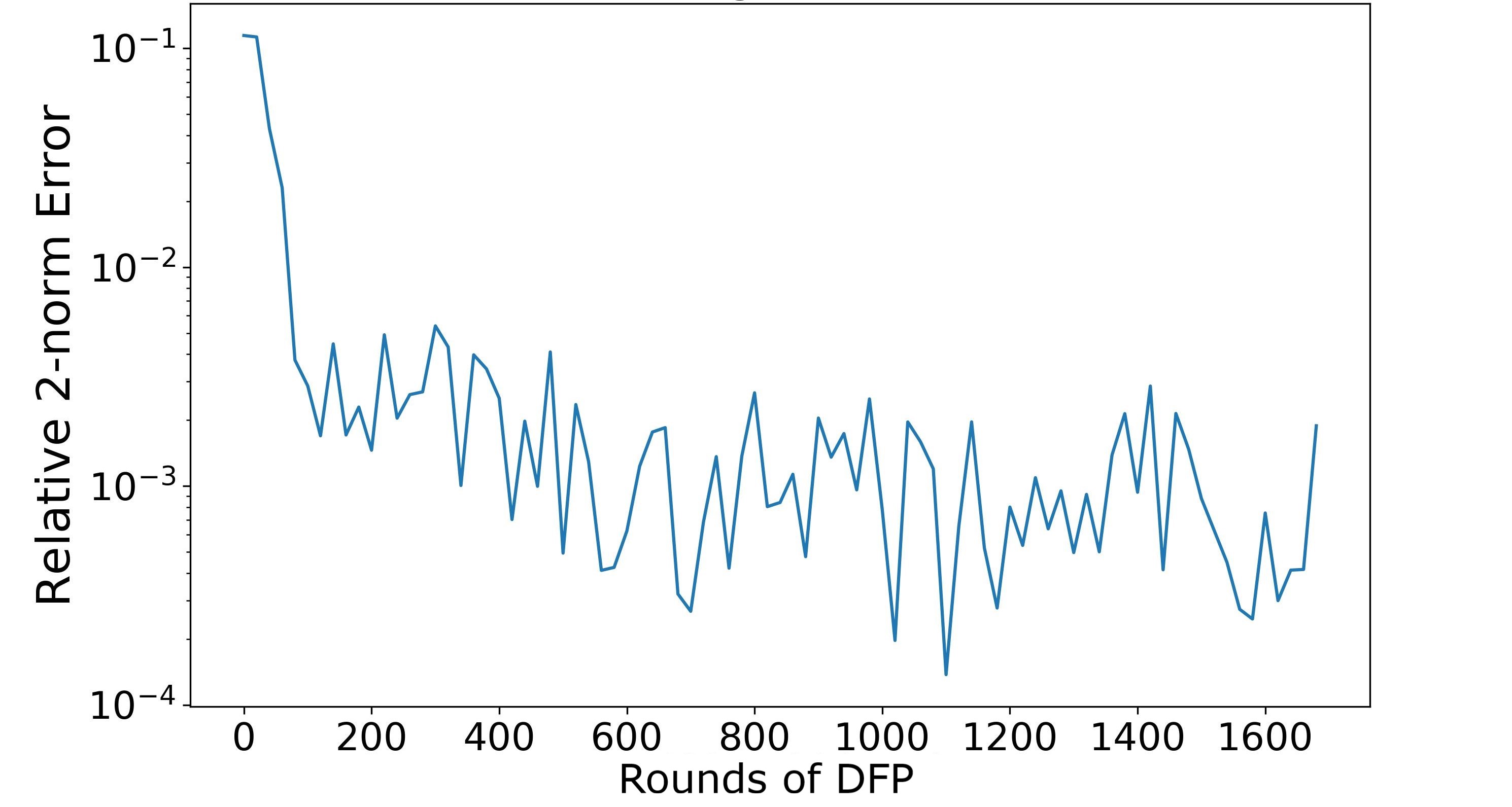}}
    \caption{The relative 2-norm error \eqref{2 norm relative error} over the course of training between $(\hat{J}^1, \cdots, \hat{J}^N)$ under the LSTM controls and the true Nash equilibrium controls for the CARA case. We take $N_\mathrm{batch} = 2^{15}$ in the computation of $(\hat{J}^1, \cdots, \hat{J}^N)$ according to Eq.~\eqref{general_numerical_SDDG_cost}. The length of training is measured in terms of rounds of DFP.}
    \label{figure:TrainingEvol_NoConsCARA}
\end{figure}

\noindent The decreasing relative $2$-norm error that plateaus at a level near $10^{-3}$ indicates a successful training of the controls. On one hand, this demonstrates that the rewards simulated under the LSTM controls are close to the true Nash equilibrium rewards. However, we are also interested to see how the trajectories themselves compare under both LSTM and true Nash equilibrium controls. Following the methodology in Section \ref{trajectory graph method}, we compare the trajectories under the LSTM controls to their true Nash equilibrium counterparts, and the resulting plots are shown in Figure \ref{figure:NoConsCARA_paths}.

\begin{figure}[H]
    \centering
    \centerline{\includegraphics[width=.8\textwidth, trim={0 0 0 0},clip]{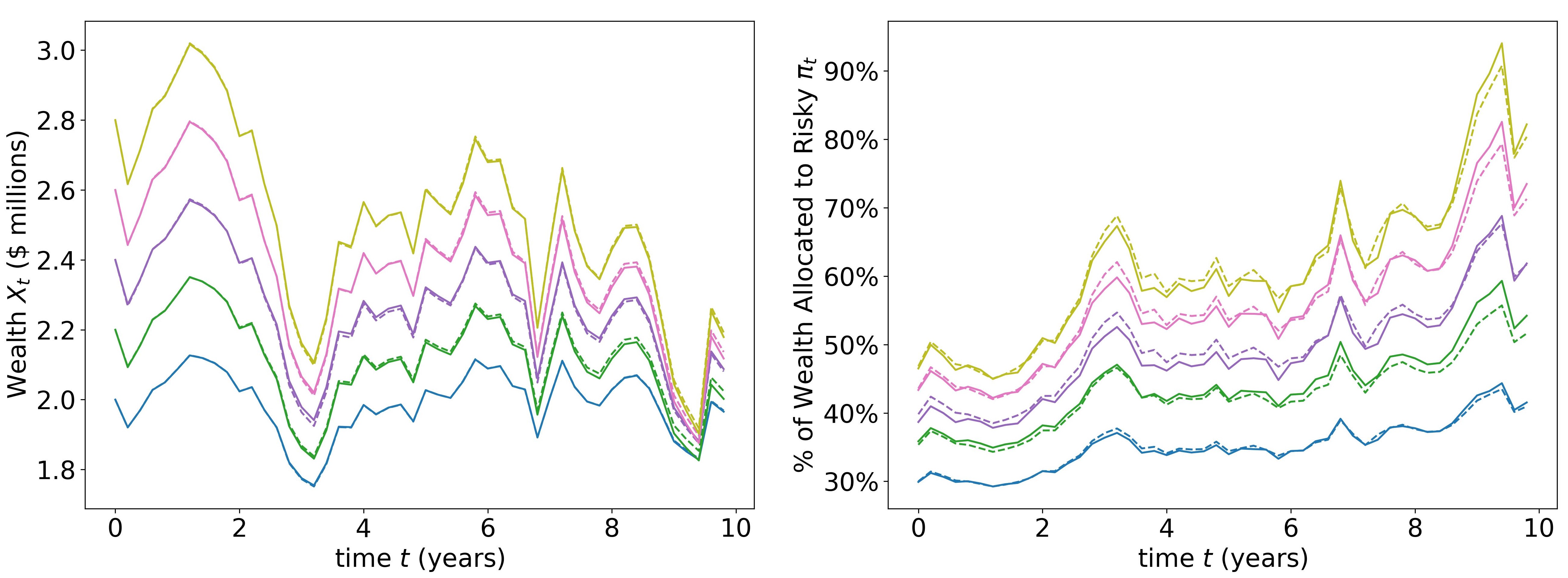}}
    \caption{Left: A sample path of the wealth processes for players  $1, 3, 5, 7,$ and $9$ (each player corresponds to a unique color) under the true Nash equilibrium controls (dashed) and the trained LSTM control (solid). Right: True Nash equilibrium controls (dashed) vs LSTM controls (solid). Controls represent the fraction of total wealth allocated to the risky asset at time $t$ for a given player. }
    \label{figure:NoConsCARA_paths}
\end{figure}

\noindent We see that the state process trajectories are nearly identical under both true and LSTM controls respectively. At the same time, the LSTM controls themselves are not only approximating the absolute level of control, but adapting to noises within the state process as illustrated by the LSTM controls matching the shape of the Nash equilibrium controls.

\subsubsection{CRRA Case}
We now consider the problem of competition between portfolio managers with delayed tax effects, Eq.~\eqref{XdyanmicsMertonNoCons}, in the CRRA case where the rewards are given by Eqs.~\eqref{costMertonNoConsCRRA1}--\eqref{costMertonNoConsCRRA2}. In our numerical experiments, we select the  parameters for this problem as shown in Table \ref{table param CRRA no consump}. For this problem, we lower the learning rate throughout training by taking it to be $10^{-2}$ for the first 500 rounds of DFP, $10^{-3}$ for the next 500, and $10^{-4}$ for the remaining 500 rounds. The approximation of the empirical rewards under the LSTM controls to that under the true Nash equilibrium controls (see Section \ref{cost graph method} for details) is examined by Figure \ref{figure:TrainingEvol_NoConsCRRA} below.

\begin{figure}[H]
    \centering
    \centerline{\includegraphics[width=.55\textwidth, trim={0 2em 0 0},clip]{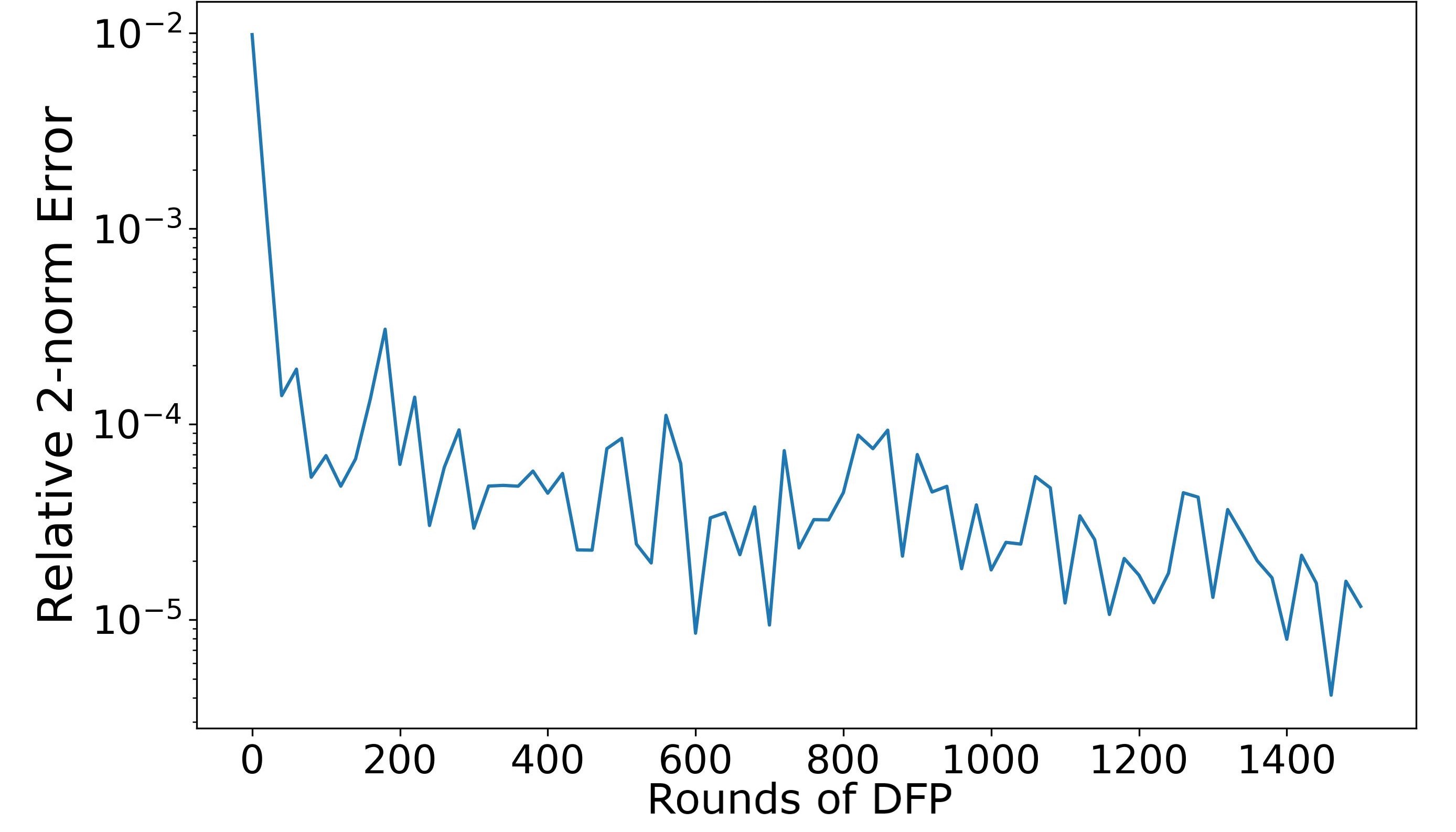}}
    \caption{The relative 2-norm error \eqref{2 norm relative error} over the course of training between $(\hat{J}^1, \cdots, \hat{J}^N)$ under the LSTM controls and the true Nash equilibrium controls for the CRRA case. We take $N_\mathrm{batch} = 2^{15}$ in the computation of $(\hat{J}^1, \cdots, \hat{J}^N)$ according to Eq.~\eqref{general_numerical_SDDG_cost}. The length of training is measured in terms of rounds of DFP.}
    \label{figure:TrainingEvol_NoConsCRRA}
\end{figure}

\noindent The relative $2$-norm error decreasing throughout training and plateauing at levels near $10^{-5}$ indicates that the training is successful and the rewards experienced under the LSTM controls are approximating the rewards experienced under the true Nash equilibrium controls. Figure \ref{fig:NoConsCRRA_paths} below allows us to compare the paths induced by these trained controls to their true Nash equilibrium counterparts.

\begin{figure}[H]
    \centering
\centerline{\includegraphics[width=.8\textwidth]{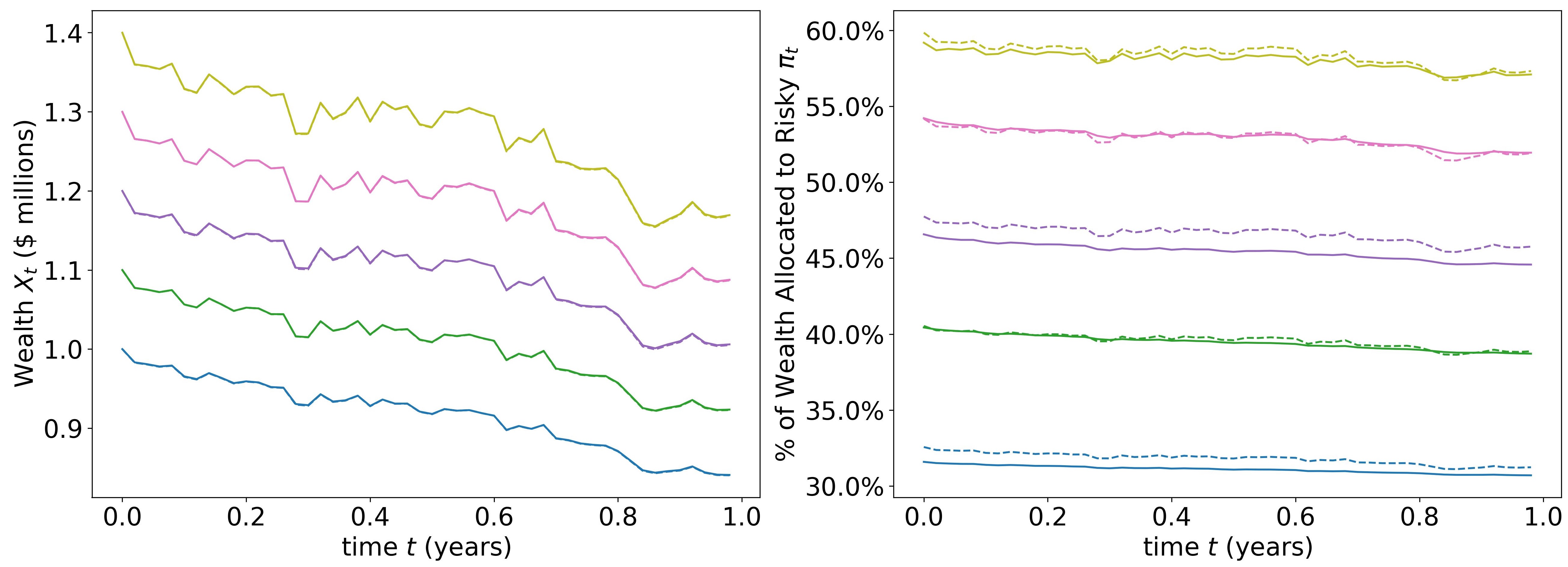}}
    \caption{Left: A sample path of the wealth processes for players  $1, 3, 5 ,7,$ and $9$ (each player corresponds to a unique color) under the true Nash equilibrium controls (dashed) and the trained LSTM control (solid). Right: True Nash equilibrium controls (dashed) vs LSTM controls (solid). Controls represent the fraction of total wealth allocated to the risky asset at time $t$ for a given player. }
    \label{fig:NoConsCRRA_paths}
\end{figure}

\noindent As indicated by Figure \ref{fig:NoConsCRRA_paths}, we see that the numerical methodology results in LSTM controls that accurately depict the true Nash equilibrium dynamics of the problem in question. The corresponding wealth processes under the true and LSTM controls are nearly identical, while the paths of the LSTM controls themselves are coinciding well with their true counterparts.

\subsection{Results for Consumption and Portfolio Allocation Game with Delayed Tax Effects}\label{subSection numerical Merton with cons}

We now consider the consumption and portfolio allocation game with delayed tax effects given by Eqs.~\eqref{XdyanmicsMertonwCons}--\eqref{costMertonwConsCRRA3}. For our numerical experiments, we select the value of the parameters as shown in Table \ref{table param CRRA w consump}. In this example, we use $10^{-2}$ as the learning rate for the first 500 rounds of DFP, $10^{-3}$ for the subsequent 500 rounds, and $10^{-4}$ for the final 1000 rounds. Following Section \ref{cost graph method}, the successive approximation of the Nash equilibrium rewards over training is illustrated by Figure \ref{figure:TrainingEvol_WConsCRRA}.
\begin{figure}[H]
    \centering
    \centerline{\includegraphics[width=.55\textwidth, trim={0 2em 0 0},clip]{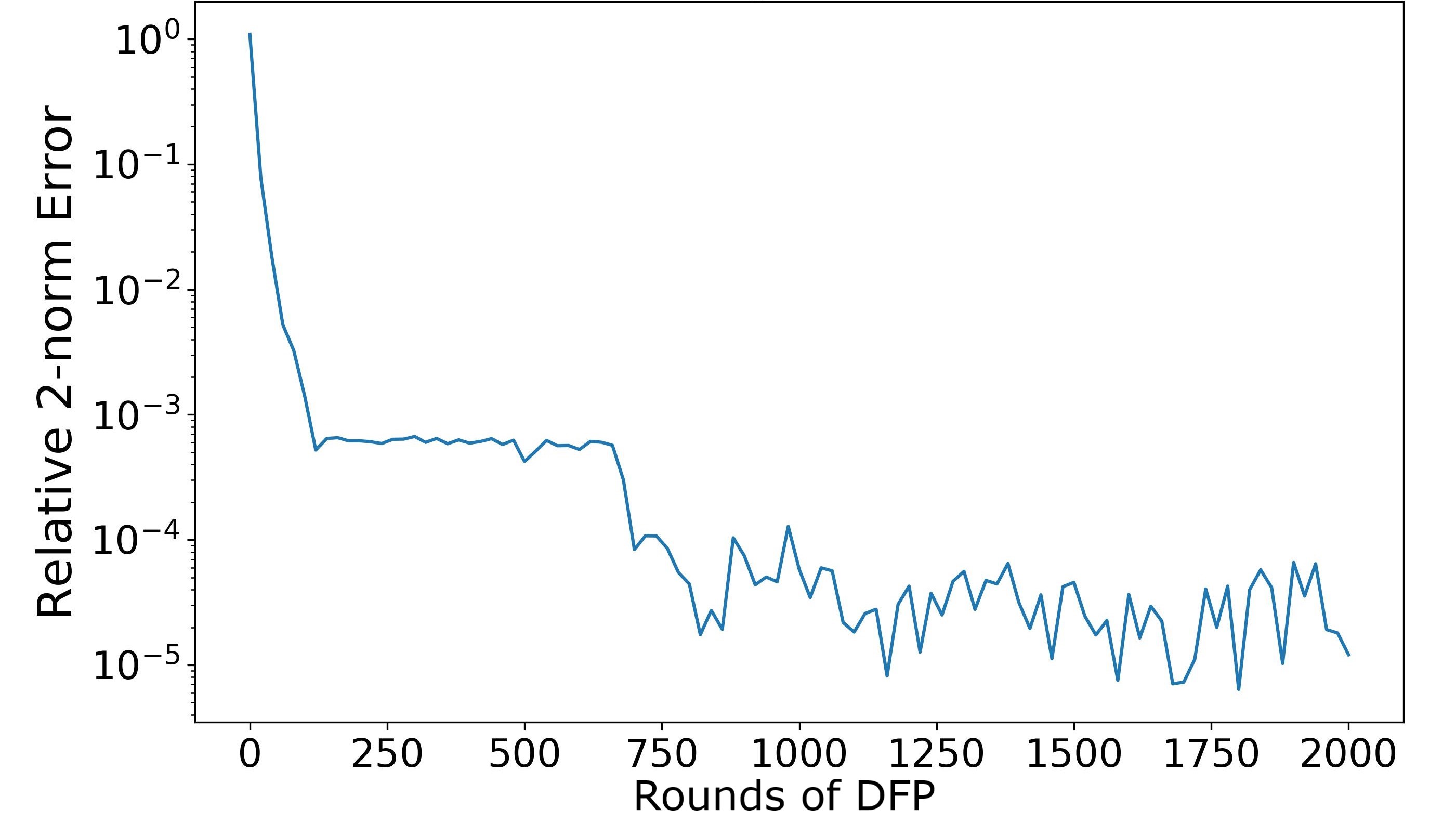}}
    \caption{The relative 2-norm error \eqref{2 norm relative error} over the course of training between $(\hat{J}^1, \cdots, \hat{J}^N)$ under the LSTM controls and the true Nash equilibrium controls. We take $N_\mathrm{batch} = 2^{15}$ in the computation of $(\hat{J}^1, \cdots, \hat{J}^N)$ according to Eq.~\eqref{general_numerical_SDDG_cost}. The length of training is measured in terms of rounds of DFP.}
    \label{figure:TrainingEvol_WConsCRRA}
\end{figure}

\noindent  This example showcases the importance of the training schedule. We notice that there is an initial plateau in the approximation of the true Nash equilibrium empirical rewards around the level of $10^{-3}$ relative $2$-norm error. The learning rate first changes after $500$ rounds of DFP and the plateau breaks shortly thereafter. The learning rate is again decreased at round $1000$ of DFP, yet another substantial decrease in relative error following this change is not seen. The relative $2$-norm error reaches a final plateau near $10^{-5}$ indicating a successful training.

The success of training is also reflected in the results from comparing the realized trajectories under both true and LSTM controls in Figure \ref{fig:WithConsCRRA_paths}. In this case, player $i$ has two controls representing the stock allocation at time $t$, $\pi^i_t$, as well as the consumption rate at time $t$, $c^i_t$. We plot in Figure \ref{fig:WithConsCRRA_paths} their corresponding unnormalized versions which are the wealth allocated to the stock and the annualized run rate of wealth consumed respectively.

\begin{figure}[H]
    \centering
    \centerline{\includegraphics[width=0.92\textwidth, height = 0.16\textheight]{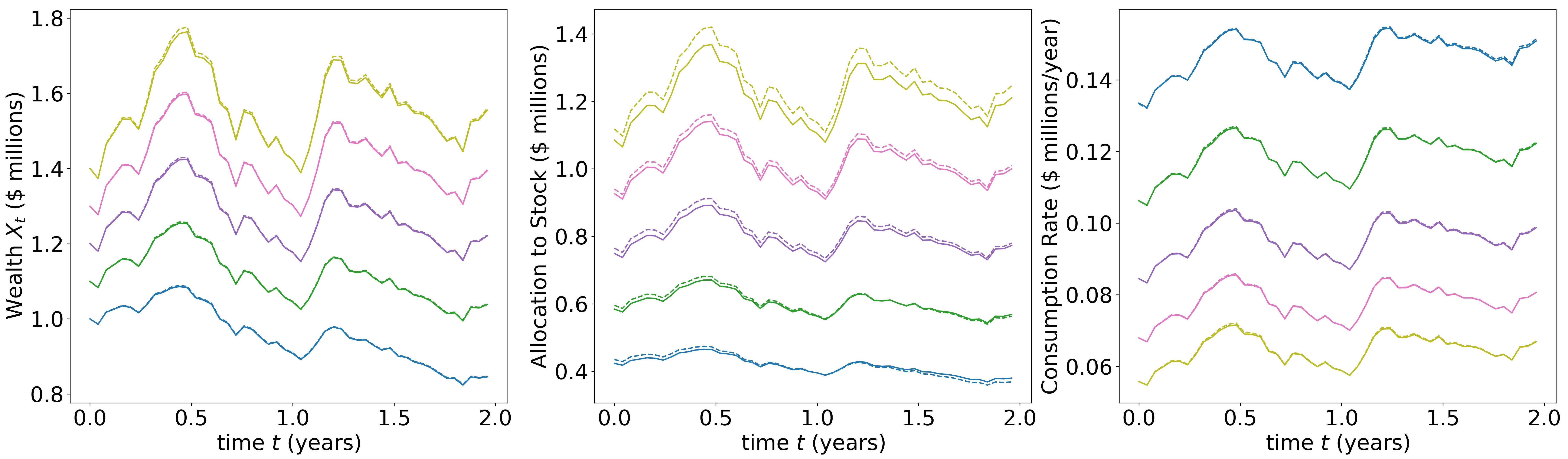}}
    \caption{Left: A sample path of the wealth processes of players $1, 3, 5 ,7,$ and $9$ under the true Nash equilibrium controls (dashed) and the trained LSTM control (solid) of the corresponding players. Center:  Nash equilibrium wealth allocation to stock (dashed) vs LSTM allocation (solid). Right: Nash equilibrium total consumption rate (dashed) vs LSTM consumption rate (solid).}
    \label{fig:WithConsCRRA_paths}
\end{figure}

\noindent We see that the trajectories produced by the trained LSTM controls coincide well with those produced under the true Nash equilibrium controls. This is especially apparent in the consumption control, which like the wealth process, contains trajectories that almost entirely overlap with the true Nash equilibrium dynamics. This example highlights the ability of the proposed algorithm to succeed in the important case where each player has multiple controls.

\subsection{Results for the Inter-Bank Lending Model}\label{subSection numerical inter-bank}
Lastly, we present the numerical results for the inter-bank lending model for systemic risk given by Eqs.~\eqref{iblwd_eqn}--\eqref{ibwld_loss}. The parameter choices are summarized in Table \ref{table iblwd params}. The learning rate iterated throughout the training is chosen to be $10^{-2}$ in the first 500 rounds of DFP, $10^{-3}$ in the next 500 rounds, $10^{-4}$ in the subsequent 500 rounds, and $10^{-5}$ in the last 2500 rounds. The relative 2-norm error between empirical rewards over training, in this case, is shown in Figure \ref{figure:TrainingEvol_IBLWD}.
\begin{figure}[H]
    \centering
    \centerline{\includegraphics[width=.55\textwidth, trim={0 1em 0 0},clip]{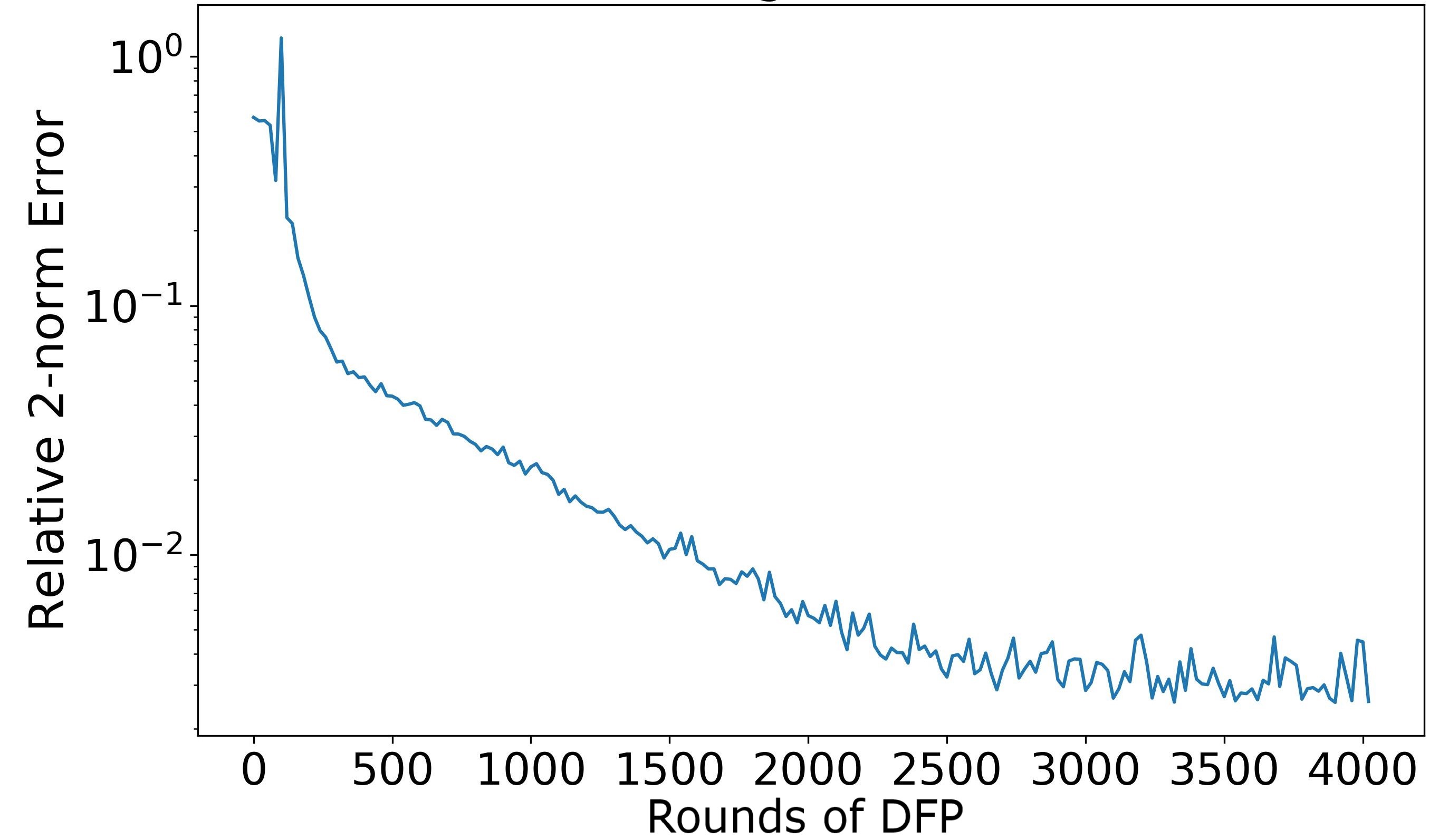}}
    \caption{The relative 2-norm error \eqref{2 norm relative error} over the course of training between $(\hat{J}^1, \cdots, \hat{J}^N)$ under the LSTM controls and the true Nash equilibrium controls for the inter-bank lending model. We take $N_\mathrm{batch} = 2^{12}$ in the computation of $(\hat{J}^1, \cdots, \hat{J}^N)$ according to Eq.~\eqref{general_numerical_SDDG_cost}. The length of training is measured in terms of rounds of DFP.}  
    \label{figure:TrainingEvol_IBLWD}
\end{figure}

\noindent The 2-norm relative error steadily decreases illustrating a continual improvement throughout training. It is important to note that in this case, what we call the ``true'' Nash equilibrium controls are actually themselves approximated as the true controls are given in terms of a PDE system (see Proposition \ref{thm iblwd}).\footnote{The PDE system is solved numerically with a discretization of 800 equally spaced slices for $t \in [0,T]$ and 50 slices for both  $s \in [-\tau,0]$  and $r \in [-\tau,0]$. The PDE is a nonlinear transport equation and the forward Euler scheme is used to solve the PDE.} This additional source of error from the PDE approximation is likely the cause of the increased smoothness of the curve in Figure \ref{figure:TrainingEvol_IBLWD} and the slightly higher levels of relative error compared to previous cases. Despite this additional source of error, the relative $2$-norm error between these two reaches levels close to $10^{-3}$, and the plateau of this error indicates successful training. However, the comparison of trajectories, Figure \ref{fig:iblwd_paths}, is the foremost illustration showcasing the success of the numerical algorithm for this particular example.

\begin{figure}[H]
    \centering
    \centerline{\includegraphics[width=.8\textwidth]{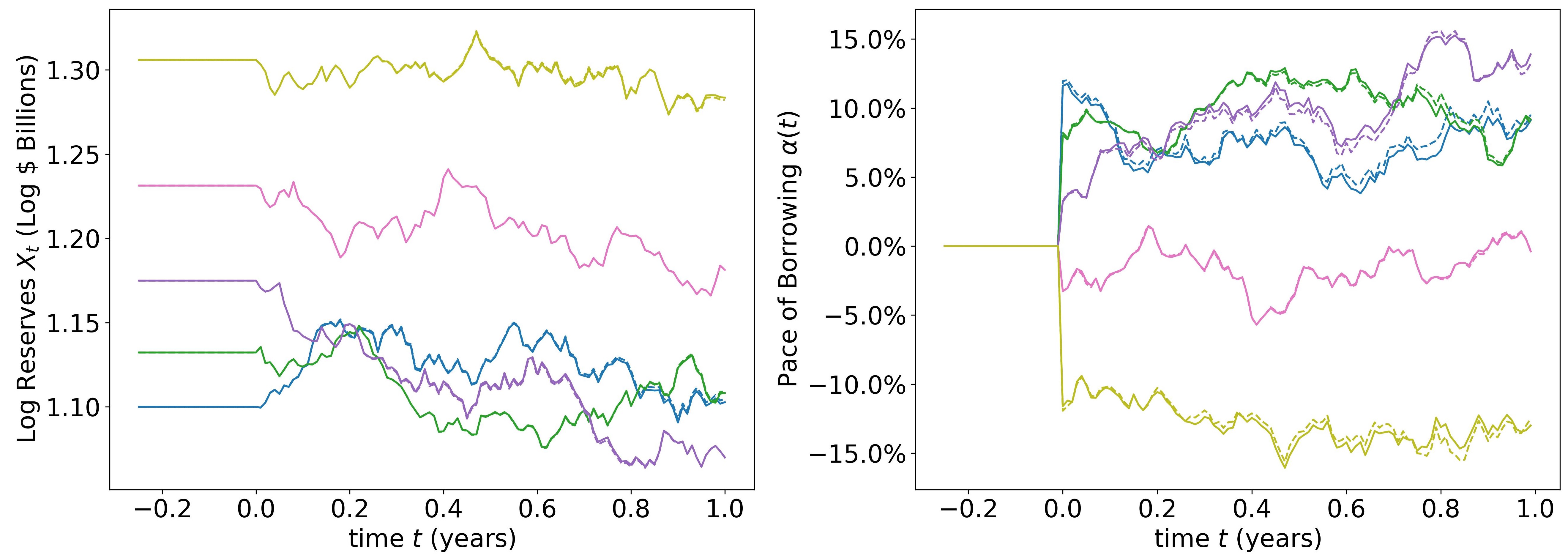}}
    \caption{Left: A sample path of the log-monetary reserves of banks $1, 3, 5 ,7,$ and $9$ under the true Nash equilibrium controls (dashed) and the trained LSTM control (solid). Right: The corresponding paths (with respect to the left picture) of Nash equilibrium controls (dashed) and LSTM controls (solid) for each bank. The controls represent the pace of borrowing/lending for each bank measured as the annualized run rate of borrowing as a percentage of current monetary reserves. }
    \label{fig:iblwd_paths}
\end{figure}

\noindent This is an interesting and unique case in that this problem's dynamics involved delay with respect to the controls themselves rather than the state process. Despite this complexity, we see that the LSTM controls are successful in approximating the true Nash equilibrium control trajectories by conforming to their shape and absolute levels exceptionally well, causing their induced state processes to be nearly identical to that of the true Nash equilibrium.

\section{Conclusion} \label{Section conclusion}
Finding the closed-loop Nash equilibrium for stochastic delay differential games poses challenges due to the inherent high dimensionality of the problem. To address this numerically, we propose a deep learning algorithm that involves parameterizing the controls with LSTM recurrent networks and simulating the trajectories of the discretized dynamical system under these LSTM controls to approximate the expected cost for each player. By approximating the expected costs given a choice of the LSTM-based controls, the LSTMs are trained in an iterative process inspired by the concept of fictitious play.

Separately, we have introduced and analyzed a new class of stochastic delay differential games that arise in finance and have obtained their analytical solutions. The derivation, interpretation, and proofs for these problems are presented in Appendices \ref{Appendix Intuition} and \ref{appendix_proof}.

From the numerical experiments we conducted, we have observed compelling evidence that our proposed algorithm successfully approximates the true Nash equilibrium controls. The trajectories of the trained controls closely match those of the true controls after training. Furthermore, we have observed the successful approximation of the costs (or rewards) for each player under the LSTM controls towards the true Nash equilibrium costs (or rewards) during the course of training. This is demonstrated by a decrease in the relative 2-norm error between these vectors throughout the training process, with relative errors reaching values lower than $10^{-5}$ in some cases.

\section{Acknowledgements}

R.H. was partially supported by the NSF grant DMS-1953035, and the Early Career Faculty Acceleration funding and the Regents’ Junior Faculty Fellowship at the University of California, Santa Barbara.

\bibliographystyle{plain}

\bibliography{ref}

\begin{thebibliography}{10}

\bibitem{TFRef}
M.~Abadi, P.~Barham, J.~Chen~Z. Chen, A.~Davis, J.~Dean, M.~Devin, S.~Ghemawat,
  G.~Irving, M.~Isard, et~al.
\newblock Tensorflow: A system for large-scale machine learning.
\newblock In {\em OSDI}, volume~16, pages 265--283, 2016.

\bibitem{Competition_among_Port_Managers}
S.~Basak and D.~Makarov.
\newblock {Competition among Portfolio Managers and Asset Specialization}.
\newblock (w0194), April 2013.

\bibitem{automatic_diff_reference}
A.~G. Baydin, B.~A. Pearlmutter, and A.~A. Radul.
\newblock Automatic differentiation in machine learning: a survey.
\newblock {\em CoRR}, abs/1502.05767, 2015.

\bibitem{FP}
G.~Brown.
\newblock Iterative solution of games by fictitious play.
\newblock {\em Activity Analysis of Production and Allocation},
  13(1):374–376, 1951.

\bibitem{CarmonaFouqueMousaviSun}
R.~Carmona, J.~Fouque, S.~Mousavi, and L.~Sun.
\newblock Systemic risk and stochastic games with delay.
\newblock {\em Journal of Optimization Theory and Applications}, 179:366--399,
  2018.

\bibitem{MFGandSysRisk}
R.~Carmona, J.~Fouque, and L.~Sun.
\newblock Mean field games and systemic risk.
\newblock {\em Communications in Mathematical Sciences}, 13(4):911--933, 2015.
\newblock Publisher Copyright: {\textcopyright} 2015 International Press.

\bibitem{deepBSDE1}
W.~E, J.~Han, and A.~Jentzen.
\newblock Deep learning-based numerical methods for high-dimensional parabolic
  partial differential equations and backward stochastic differential
  equations.
\newblock {\em Communications in Mathematics and Statistics}, 5(4):349--380,
  nov 2017.

\bibitem{SOCinfdim}
G.~Fabbri, F.~Gozzi, and A.~Swiech.
\newblock {\em Stochastic optimal control in infinite dimension}.
\newblock Probability and Stochastic Modelling. Springer, 2017.

\bibitem{fouque2019deep}
J.~Fouque and Z.~Zhang.
\newblock Deep learning methods for mean field control problems with delay,
  2019.

\bibitem{advertising_models_1}
F.~Gozzi and C.~Marinelli.
\newblock Stochastic optimal control of delay equations arising in advertising
  models.
\newblock 245, 01 2005.

\bibitem{han2016deep}
J.~Han and W.~E.
\newblock Deep learning approximation for stochastic control problems, 2016.

\bibitem{DFP2}
J.~Han and R.~Hu.
\newblock Deep fictitious play for finding markovian nash equilibrium in
  multi-agent games.
\newblock In {\em Mathematical and Scientific Machine Learning}, 2019.

\bibitem{deepBSDE2}
J.~Han, A.~Jentzen, and W.~E.
\newblock Solving high-dimensional partial differential equations using deep
  learning.
\newblock {\em Proceedings of the National Academy of Sciences},
  115(34):8505--8510, aug 2018.

\bibitem{RNNSCPwD}
R.~Han, J.and~Hu.
\newblock Recurrent neural networks for stochastic control problems with delay.
\newblock {\em Mathematics of Control, Signals, and Systems}, 33:775--795,
  2021.

\bibitem{LSTM}
S.~Hochreiter and J.~Schmidhuber.
\newblock Long short-term memory.
\newblock {\em Neural computation}, 9:1735--80, 12 1997.

\bibitem{DFPSDG}
R.~Hu.
\newblock Deep fictitious play for stochastic differential games.
\newblock {\em Communications in Mathematical Sciences}, 19(2):325–353, 2021.

\bibitem{adam_reference}
D.P. Kingma and J.~Ba.
\newblock Adam: A method for stochastic optimization.
\newblock In Yoshua Bengio and Yann LeCun, editors, {\em ICLR (Poster)}, 2015.

\bibitem{lackerwconsump}
D.~Lacker and A.~Soret.
\newblock Many-player games of optimal consumption and investment under
  relative performance criteria.
\newblock {\em Mathematics and Financial Economics}, 14:263--281, 2019.

\bibitem{lackerNoConsump}
D.~Lacker and T.~Zariphopoulou.
\newblock Mean field and n-agent games for optimal investment under relative
  performance criteria.
\newblock {\em Mathematical Finance}, 29(4):1003--1038, 2019.

\bibitem{EulerMaruyamaAnalysisMao}
X.~Mao.
\newblock Approximate solutions for a class of delay stochastic differential
  equations.
\newblock {\em Stochastics and Stochastic Reports}, 35(2):111--123, 1991.

\bibitem{MertonOriginal}
R.~Merton.
\newblock Lifetime portfolio selection under uncertainty: The continuous-time
  case.
\newblock {\em The Review of Economics and Statistics}, 51(3):247--57, 1969.

\bibitem{Mohammed2}
S.~A. Mohammed.
\newblock Stochastic differential systems with memory: Theory, examples and
  applications.
\newblock In {\em Stochastic Analysis and Related Topics VI}, pages 1--77,
  Boston, MA, 1998. Birkh{\"a}user Boston.

\bibitem{delayedproblem}
T.~Pang and A.~Hussain.
\newblock A stochastic portfolio optimization model with complete memory.
\newblock {\em Stochastic Analysis and Applications}, 35(4):742--766, 2017.

\bibitem{PyTorchRef}
A.~Paszke, S.~Gross, F.~Massa, A.~Lerer, J.~Bradbury, G.~Chanan, T.~Killeen,
  Z.~Lin, et~al.
\newblock Pytorch: An imperative style, high-performance deep learning library,
  2019.

\bibitem{rumelhart1986learning}
D.~Rumelhart, G.~Hinton, and R.~Williams.
\newblock Learning representations by back-propagating errors.
\newblock {\em nature}, 323(6088):533--536, 1986.

\bibitem{DGM1}
J.~Sirignano and K.~Spiliopoulos.
\newblock {DGM}: A deep learning algorithm for solving partial differential
  equations.
\newblock {\em Journal of Computational Physics}, 375:1339--1364, dec 2018.

\end{thebibliography}

\appendix

\section{Interpretation and Derivations of the Games in Sections~\ref{subSection the competition of portfolio managers with Delayed Tax Effects}--\ref{subSection the Competitive Consumption and Portfolio Allocation with Delayed Tax Effects problem}}\label{Appendix Intuition}

\subsection{Portfolio Optimization with Delayed Taxes Effects}\label{appendix Portfolio Optimization with Delayed Taxes}

We start with restating the original Merton problem \cite{MertonOriginal}.  Consider an investor who chooses between a stock and a bond. At time $t$, the fraction of wealth $\pi_t$ is invested in the stock, and $1 - \pi_t$ is invested in the bond. In general, we have $\pi_t \in \mathbb{R}$, where $\pi_t > 1$ corresponds to a leveraged stock position, while $\pi_t < 0$, corresponds to the investor being short the stock. The bond is assumed to accrue at a continuously compounded rate of interest $r$ and the stock evolves according to the Black-Scholes model $\frac{\ud S_t}{S_t} = \mu \ud t + \sigma \ud W_t$. 
Denoting  by  $X_t$ the investor's wealth at time $t$, one then has
\begin{equation}\label{intro_Merton_problem}
     \ud X_t = [(\mu - r) \pi_t X_t  + r X_t]  \ud t + \sigma \pi_t X_t \ud W_t, \ t \in (0,T].
\end{equation}
The investor aims to choose a strategy $\pi$ to optimize her expected utility of terminal wealth 
\begin{equation}\label{intro_Merton_problem_reward}
    J[\pi] = \mathbb{E}[U(X_T)],
\end{equation}
subject to the dynamics \eqref{intro_Merton_problem}. Intuitively, the expected utility quantifies the investor's desire for the random outcome $X_T$. 



We now consider an additional outflow of wealth due to taxes in Eq.~\eqref{intro_Merton_problem}. We assume that at the end of the period $[t,t+\mathrm{d}t]$, the investor pays the amount $\mu_2 Y_t \ud t$ in taxes, where $Y_t = \int_{-\infty}^t \lambda e^{-\lambda(t- s) } X_{s} \ud s$ is the investor's exponentially averaged past wealth. This leads to the modified dynamics of the wealth process $X$ given by
\begin{equation}\label{delayed_Merton_problem}
 \ud X_t =  \left((\mu_1 - r) \pi_t X_t  + rX_t - \mu_2 Y_t   \right) \ud t + \sigma \pi_t X_t \ud W_t, \ t \in (0,T].
\end{equation}
This model with $\mu_2 <0$ was introduced and solved in \cite{delayedproblem} arising from a type of momentum effect. For our consideration, we take $\mu_2 > 0$ and the term $\mu_2 Y_t \ud t$ represents the fact that the investor is paying the fixed percentage (or tax rate) $\mu_2 > 0$ of their historical wealth. This outflow could represent management fees, trading fees, and/or taxes. For simplicity, we shall refer to it as ``taxes'' in the sequel. 

Such modeling enables us to capture some realistic features:  1) taxes increase with and proportional to wealth; 2) there is a delay between when a tax is realized and when it is paid; 3) this delayed period for taxes varies for a given tax and is itself random. To see this, let's assume that the taxes paid over $[t,t + \mathrm{d}t]$ occur due to numerous tax bills that were realized $\tau$ units in the past. If we further assume that the time to pay these taxes, $\tau$, follows an exponential distribution with a rate $\lambda$, then one could approximate the taxes paid in $[t,t + \mathrm{d}t]$ by its mean under the scenario of a high frequency of tax occurrences with small tax amounts at each occurrence. This gives rise to the flux of wealth of $ -\mu_2 \int_{-\infty}^t \lambda e^{-\lambda(t- s)} X_{s} \ud s \ud t =  -\mu_2 Y_t \ud t$ as it appears in Eq.~\eqref{delayed_Merton_problem}. In essence, the tax at time $t$ of $\mu_2 Y_t \ud t$ naturally represents the delayed accrual of taxes due at time $t$ based on past wealth as a result of a delay in billings.



Since Eq.~\eqref{delayed_Merton_problem} includes an outflow due to taxes, one observes that cash will no longer grow at the risk-free rate $r$. Therefore, it is natural to ask if there is a tax-adjusted risk-free rate that better represents the growth of cash in this model. This is addressed in Appendix \ref{appendix effective derivation}. Moreover, while $X_t$ represents the wealth of an investor at time $t$, the investor is carrying around a hidden tax liability given through their past history of wealth. We will also argue in Appendix \ref{appendix effective derivation} that the variable $Z_T = X_T + a Y_T$ represents the tax-adjusted wealth of the investor at time $t$ when $a$ is given by $a = \frac{-(r+\lambda) + \sqrt{(r+\lambda)^2 - 4 \lambda \mu_2}}{2\lambda}$. With this in mind, the problem we consider is that of an investor who seeks to maximize the quantity
\begin{equation}\label{delayed_Merton_problem_reward}
   J[\pi] = \mathbb{E}[U(Z_T)],
\end{equation}
which is the expected utility of tax-adjusted terminal wealth. 

Lastly, we mention that the stochastic control problem with such delay structure~\eqref{delayed_Merton_problem} and $\mu_2 <0$ was considered in \cite{delayedproblem}, including notably the form of the utility \eqref{delayed_Merton_problem_reward} depending on $Z_T = X_T + a Y_T$, where $a = \frac{-(r+\lambda) + \sqrt{(r+\lambda)^2 - 4 \lambda \mu_2}}{2\lambda}$. Therein, $\mu_2 < 0$ is interpreted as a momentum-like effect arising from the market structure. Our work considers $\mu_2>0$, leading to a quite different interpretation as discussed above.  

\subsection{Tax-Adjusted Wealth and the Tax-Adjusted Risk-Free Rate}\label{appendix effective derivation}

We have discussed the motivation and interpretation of Eqs.~\eqref{delayed_Merton_problem}--\eqref{delayed_Merton_problem_reward}, which comes from an investor who is paying taxes at time $t$ at a rate $\mu_2>0$ on her exponential average of past wealth. In this section, we give further interpretations to the quantities $Z_t$ and $r + \lambda a$, as the tax-adjusted wealth and the tax-adjusted risk-free rate respectively. We define the tax-adjusted risk-free rate to be the long-term exponential growth rate of wealth for an all-bond account. Then tax-adjusted wealth is the process that grows precisely at the tax-adjust risk-free rate when considering the all-bond investor. In other words, $X_t$ is no longer the best measurement of an investor's ``true'' wealth as it does not incorporate the hidden tax liabilities which arise due to the past history of $X_t$, yet contributes to taxes beyond time $t$; and the usual risk-free rate $r$ no longer represents the rate of accrual of a pure bond account due to tax drag.

To see this, we consider the dynamics of an all-bond investor $\pi_t \equiv 0$:
\begin{equation}\label{eqn x}
 \ud X_t = rX - \mu_2 Y_t   \ud t .
\end{equation}
Recall that  $Y_t = \int_{-\infty}^t \lambda e^{-\lambda(t- s) } X_{s} \ud s$ and $\ud Y_t = \lambda (X_t -  Y_t)   \ud t$, one has for arbitrary $a \in \mathbb{R}$
\begin{equation*}
 \ud (X_t + aY_t) = (r + \lambda a)X_t \ud t + (-\mu_2 - \lambda a) Y_t   \ud t.
\end{equation*}
Therefore if $a$ satisfies $-\mu_2 - \lambda a = a(r + \lambda a)$, then we will have
\begin{equation*}
 \ud (X_t + aY_t) = (r + \lambda a)(X_t + aY_t) \ud t ,
 \end{equation*}
which occurs when $a$ takes values of 
$a_{\pm} = \frac{-(r+\lambda) \pm \sqrt{(r+\lambda)^2 - 4 \lambda \mu_2}}{2\lambda}$.
Note that $a_{\pm}$ are distinct real numbers since $(r+\lambda)^2 - 4 \lambda \mu_2>0$ is assumed.

Denoting $Z^{\pm}_t = X_t + a_{\pm}Y_t $, we have two linearly independent representations for $X$, which allows us to eliminate $Y$ resulting in an expression of $X$ as a linear combination of $Z^+$ and $Z^-$.  Using that $Z^{\pm}_t = Z^{\pm}_0 e^{(r+\lambda a_\pm)t}$, we obtain the expression
\begin{equation*}
 X_t = c_+ e^{(r+\lambda a_+)t} + c_- e^{(r+\lambda a_-)t},
\end{equation*}
where $c_+ = \frac{a_-}{a_- - a_+} (X_0 + a_+ Y_0)$ and $c_- = \frac{-a_+}{a_- - a_+} (X_0 + a_- Y_0)$. Since $\lambda > 0$ and $a_+ > a_-$, we have that $r + \lambda a_+ > r + \lambda a_-$. Because of this, the wealth of the all-bond investor, $X_t$, has the property
\begin{equation*}
\lim_{t \rightarrow \infty} e^{-kt} X_t =
    \begin{cases}
        \infty, &  k <  r + \lambda a_+,\\
        c_{+}, &  k = r + \lambda a_+, \\
        0 ,&  k > r + \lambda a_+, \\
    \end{cases}
\end{equation*}
where $c_+>0$ holds assuming the initial quantity $X_0 + a_+ Y_0 > 0$ as well as $r+\lambda > 0$. This is guaranteed under either of the realistic assumptions that $r \geq 0$ or $|r| << \lambda$. Therefore, the rate $r + \lambda a_+$ is precisely the long-run exponential growth rate of an all-bond account, meaning cash grows at the rate  $r + \lambda a_+$ in the long run. 
And for this choice of $a$, one has 
$
X_t + aY_t = (X_0 + aY_0)e^{(r+\lambda a)t}$, consequently, $X_t + aY_t$ can be interpreted as tax-adjusted wealth. Note that, in fact,  $c(X_t + aY_t)$ for any $c \in \mathbb{R}$ could represent the tax-adjusted wealth by our requirement.  However, $X_t + aY_t$ is the correct choice out of these by imposing a natural second requirement: the tax-adjusted wealth should be consistent with the wealth if the investor has no tax liability. Thus $c = 1$. 
The notion of $X_t + aY_t$ as the tax-adjusted wealth also makes sense intuitively as in our case of taxes ($\mu_2 > 0$) results in $a<0$ under the realistic assumption that $r+\lambda > 0$. Hence $X_t + aY_t$ represents an adjustment to total assets $X_t$ taking into account the tax liability given through $Y_t$.


\subsection{Competition between Portfolio Managers}\label{AppendixMertonGame}
We temporarily ignore the delay arising from taxes and review the game extension of Merton's original problem which has been considered in 
\cite{Competition_among_Port_Managers, lackerwconsump, lackerNoConsump}.
We will briefly summarize them for convenience as they inspire the form of the new problems we consider in Sections~\ref{subSection the competition of portfolio managers with Delayed Tax Effects}--\ref{subSection the Competitive Consumption and Portfolio Allocation with Delayed Tax Effects problem}. The motivation \cite{Competition_among_Port_Managers} comes from competing portfolio managers who are selected by their clients (or awarded bonuses) not only based on the fund's absolute performance, but also based on the fund's performance compared to similar funds. To address this possibility, the portfolio manager may have a utility function that takes into account both their absolute performance, as well as their performance relative to their peer group. \cite{Competition_among_Port_Managers, lackerwconsump, lackerNoConsump} model this by considering the interaction between managers occurring through the reward function. 

Let $X_t^i$ be manager $i$'s wealth process. Her utility can depend on both her terminal wealth  $X^i_T$ as well as her terminal wealth relative to that of her peers $X^i_T - \olsi{X}_T$, where $\olsi{X}_T = \frac{1}{N} \sum_{i=1}^N X^i_T$ is the arithmetic mean. A weighted average $(1-\theta_i) X^i_T + \theta_i (X^i_T - \olsi{X}_T) =  X^i_T - \theta_i \olsi{X}_T $ can serve as the input for the utility function, where $\theta_i \in (0,1)$ measures the extent that manager $i$ weighs relative versus absolute performance. Specifically, \cite{lackerNoConsump} consider  the constant absolute risk aversion (CARA) case, and the reward for player $i$ is given by
\begin{equation}\label{CARA Utility}
    J^i = E [U_i( X^i_T - \theta_i \olsi{X}_T)],     \quad 
    U_i(z) = - \exp(-\frac{1}{\delta_i} z), \\
\end{equation}
where $\delta_i > 0$ is the risk tolerance of manager $i$. They consider constant relative risk aversion (CRRA) utilities
\begin{equation}\label{CRRA Utility}
    U_i(z) = \begin{cases}  
        \frac{1}{1- \frac{1}{\delta_i}} z^{1- \frac{1}{\delta_i}}, & \delta_i > 0, \delta_i \neq 1, \\
        \log(z), &  \delta_i = 1 ,\\
    \end{cases}
\end{equation}
where $\delta_i > 0$ is the risk tolerance of manager $i$. 

The CRRA case is also considered in \cite{lackerNoConsump}. In this case, for tractability, the competing managers compare relative performance to absolute performance modeled by the ratio $\frac{X_T^i}{(\olsi{X}_T)^{\theta_i}}$, where $\olsi{X} =  (\prod_{i=1}^N X^i)^{1/N}$ is the geometric average. Then we can say manager $i$ seeks to maximize her expected utility given by
\begin{equation}\label{CRRA Expected Utility}
J^i = E [U_i( X^i_T (\olsi{X}_T)^{-\theta_i})].
\end{equation}

\subsection{Competition between Portfolio Managers with Delayed Tax Effects} \label{appendix competition of portfolio managers with Delayed Tax Effects}
We have now discussed the extension of the Merton problem to one with delay as well as one as a game. Now, we seek to combine both aspects together. The wealth dynamics of manager $i$, denoted by $X_t^i$,  can easily be generalized via Eq.~\eqref{delayed_Merton_problem}, and given by
\begin{equation}\label{XdyanmicsMertonNoConsappendix}
\begin{aligned}
 & \ud X^i_t =  \left((\mu_1 - r) \pi^i_t X^i_t  + rX^i_t -\mu_2 Y^i_t  \right) \ud t + \sigma \pi^i_t X^i_t \ud W_t, \ t \in (0,T], \\
\end{aligned}
\end{equation}
where $Y^i_t = \int_{-\infty}^t \lambda e^{-\lambda(t- s)} X^i_{s} \ud s$. 

As for the reward function, we again consider both the CARA case \eqref{CARA Utility} and the CRRA case \eqref{CRRA Utility}, but replace the terminal wealth with the discounted, tax-adjusted terminal wealth. To recall the discussion in Appendix \ref{appendix Portfolio Optimization with Delayed Taxes} and \ref{appendix effective derivation}, we have identified the tax-adjusted risk-free rate to be $r+\lambda a$ and the tax-adjusted wealth for player $i$ at time $t$ to be 
\begin{equation*}
    Z^i_t = X^i_t + aY^i_t,
\end{equation*}
where $a = \frac{-(r+\lambda) + \sqrt{(r+\lambda)^2 - 4 \lambda \mu_2}}{2\lambda}$. The discounted, tax-adjusted wealth at time $t$ is then given by
\begin{equation*}
    Z^i_{disc,t} = e^{-(r+\lambda a)t} Z^i_t,
\end{equation*}
which discounts the tax-adjusted wealth back to time $0$ under the tax-adjusted risk-free rate. Therefore,  in the CARA utility case, it is natural to consider the reward for player $i$ by
\begin{equation*}
\begin{aligned}
   J^i[\bm \pi] = \mathbb{E} \left[U_i \left( Z^i_{disc,T} - \theta_i \olsi{Z}_{disc,T}  \right) \right], \\
\end{aligned}
\end{equation*}
where $\olsi{Z}_{disc,T}= \frac{1}{N} \sum_{i=1}^N Z^i_{disc,T}$. In contrast, for the CRRA utility case with $U_i$ given by Eq.~\eqref{CRRA Utility}, one has
\begin{equation*}
   J^i[\bm \pi] = \mathbb{E} \left[U_i \left(  Z^i_{disc,T} \olsi{Z}_{disc,T}^{-\theta_i}  \right)  \right],
\end{equation*}
where in this case $\olsi{Z}_{disc,T} =   \left(\prod_{i=1}^N Z^i_{disc,T}  \right)^{1/N}$.

In both cases, we see that the portfolio manager is simply comparing her terminal tax-adjusted wealth to that of her peers in the manner discussed in Appendix \ref{AppendixMertonGame} in determining her utility. 

\subsection{Consumption and Portfolio Allocation Game with Delayed Tax Effects}\label{appendix the Competitive Consumption and Portfolio Allocation with Delayed Tax Effects problem}

We further extend the modeling \eqref{XdyanmicsMertonNoConsappendix} by considering consumption. In addition to the investment strategy $\pi_t^i$, the consumption rate $c_t^i$ will also be chosen by investor $i$. The consumption rate $c_t^i$ measures investor $i$'s annual run rate of consumption at time $t$ as a fraction of her wealth. Precisely, over $[t,t+\mathrm{d} t]$ investor $i$ consumes the dollar amount given by $c^i_t X^i_t \ud t$. By including this outflow due to consumption, we have the wealth dynamics for player $i$ given by
\begin{equation*}
  \ud X^i_t = \Big((\mu_1 - r) \pi^i_t X^i_t  + rX^i_t -\mu_2 Y^i_t - c^i_t X^i_t \Big) \ud t + \sigma \pi^i_t X^i_t \ud W_t, \ t \in (0,T], \\
\end{equation*}
where as before, $Y^i_t = \int_{-\infty}^t \lambda e^{-\lambda(t- s)} X^i_{s} \ud s$. The quantity $Z^i_T = X^i_T + a Y^i_T$ with $a = \frac{-(r+\lambda) + \sqrt{(r+\lambda)^2 - 4 \lambda \mu_2}}{2\lambda}$ again represents the tax-adjusted wealth of player $i$. 

The expected utility will be a sum of the utility from discounted consumption and the utility from discounted terminal wealth. The exact form is motivated from \cite{lackerwconsump}, which has the utility of consumption given by $U^i((c_t^iX^i_t)(\olsi{cX}_t)^{-\theta_i})$ with $\olsi{cX}_t = (\Pi_{i=1}^N (c_t^iX_t^i))^{1/N}$. With the additional delayed tax effects in our modeling, 
we consider an analogous utility of discounted consumption and an analogous utility of discounted, tax-adjusted terminal wealth, where the discounting is taken with respect to the tax-adjusted risk-free rate. The resulting reward function for each player $i \in \{ 1, \cdots, N \}$ is given by
\begin{equation*}
  J^i[\bm \pi, \bm c] = E\Big[\int_0^T U^i(C_{disc,t}^i \olsi{C_{disc,t} }^{-\theta_i}) \ud t + \epsilon_i U_i\Big( Z_{disc,T}^i \olsi{Z_{disc,T}}^{-\theta_i} \Big) \Big],
\end{equation*}
where
\begin{equation*}
    \begin{aligned}
   &  C_{disc,t}^i =  e^{-(r+\lambda a)t} c^i_t X^i_t, & \olsi{C_{disc,t}} = \big(\prod_{i=1}^N C^i_{disc,t} \big)^{1/N}, \\
    & Z_{disc,t}^i = e^{-(r+\lambda a)t}Z^i_t, 
    & \olsi{Z_{disc,t}} = \big(\prod_{i=1}^N Z^i_{disc,t} \big)^{1/N}, \\
    \end{aligned}
\end{equation*}
and
\begin{equation*}
 U_i(z)  = \begin{cases}  
        \frac{1}{1- \frac{1}{\delta_i}} z^{1- \frac{1}{\delta_i}}, &  \delta_i \neq 1, \\
        \log(z), &  \delta_i = 1 .\\
    \end{cases}\\
\end{equation*}
Here $Z_{disc,t}^i$ is the tax-adjusted wealth for player $i$ discounted according to the tax-adjusted risk-free rate. $C_{disc,t}^i$ is the discounted total consumption over $[t,t+\mathrm{d} t]$ for player $i$. 

In essence, investor $i$ determines her total utility by summing up her utilities of consumption over intervals of length $\ud t$. For example, over the time interval  $[t,t+\mathrm{d} t]$, the utility of player $i$ is taken by comparing her own discounted consumption compared to that of her peers and weighted by the amount $\ud t$. In the final stage, the utility is taken by comparing her discounted terminal wealth to that of her peers and is weighted $\epsilon_i$. Thus $\epsilon_i$ represents player $i$'s preference for wealth as compared to consumption.

\section{Proofs of Propositions}\label{appendix_proof}
\subsection{Proof of Proposition \ref{thmCARAnoconsump}}\label{CARA utility proof no consump}

For convenience, we restate the dynamics \eqref{XdyanmicsMertonNoCons}, which is 
\begin{equation*}
\begin{aligned}
 & \ud X^i_t =  \left((\mu_1 - r) \pi^i_t X^i_t  + rX^i_t -\mu_2 Y^i_t  \right) \ud t + \sigma \pi^i_t X^i_t \ud W_t, \ t \in (0,T], \\
 & X^i_t = \zeta^i(t), \ t \in (-\infty,0], \\
\end{aligned}
\end{equation*}
where the delay variable, $Y^i_t$, is defined as $
Y^i_t = \int_{-\infty}^t \lambda e^{-\lambda(t- s)} X^i_{s} \ud s.$
Motivated by the analysis in \cite{delayedproblem}, we note that differentiating $Y^i_t$ gives the following differential relation
\begin{equation*}
\begin{aligned}
 & \ud Y^i_t = \lambda (X^i_t - Y^i_t) \ud t.\\
\end{aligned}
\end{equation*}
Multiplying $\ud Y^i_t$ by $a$ and adding with $\ud X^i_t$, we get
\begin{equation*}
  \ud (X^i_t + a Y^i_t) =  \left( (\mu_1 - r) \pi^i_t X^i_t  + (r + \lambda a)X^i_t + (-\mu_2 -\lambda a) Y^i_t  \right) \ud t + \sigma \pi^i_t X^i_t \ud W_t. \\
\end{equation*}
Now, using the parameter value $a = \frac{-(r+\lambda) + \sqrt{(r+\lambda)^2 - 4 \lambda \mu_2}}{2\lambda}$, we have that $a(r+\lambda a) = -\mu_2 - \lambda a$ as one can see $a$ is a root of this quadratic equation. Thus denoting $Z^i_t = X^i_t + a Y^i_t$, we get
\begin{equation*}
  \ud Z^i_t =  \left( (\mu_1 - r) \pi^i_t X^i_t  + (r + \lambda a)Z^i_t  \right) \ud t + \sigma \pi^i_t X^i_t \ud W_t. \\
\end{equation*}
Multiplying by the integrating factor $e^{-(r+\lambda a)t}$ and denoting $Z_{disc,t}^i = e^{-(r+\lambda a)t}Z^i_t$, we get
\begin{equation}\label{Z disc dynamics}
  \ud Z_{disc,t}^i =  (\mu_1 - r) \pi^i_t e^{-(r+\lambda a)t} X^i_t \ud t + \sigma \pi^i_t e^{-(r+\lambda a)t} X^i_t \ud W_t. \\
\end{equation}
Now, for each $i$, define the transformed control $\tilde{\pi}^i$ by
\begin{equation}\label{Prop 4.1 transformed control}
\begin{aligned}
\tilde{\pi}^i_t = \pi^i_t X^i_t e^{-(r+\lambda a)t}.\\
\end{aligned}
\end{equation}
Substituting the transformed controls, we have the following controlled SDE for each $i \in \{1, \cdots, N \}$
\begin{equation}\label{prop 4.1 transformed eq 1}
  \ud Z_{disc,t}^i =   (\mu_1 - r) \tilde{\pi}^i_t  \ud t + \sigma \tilde{\pi}^i_t  \ud W_t, \quad t \in (0,T]. \\
\end{equation}
Restating the reward function defined in Proposition \ref{thmCARAnoconsump}, one has that the reward for each player $i \in \{ 1, \cdots, N \}$ is given by
\begin{equation}\label{prop 4.1 transformed eq 2}
\begin{aligned}
    J^i[\tilde{\bm \pi}] = \mathbb{E} \left[U_i \left( Z^i_{disc,T} - \theta_i \olsi{Z}_{disc,T}  \right) \right], \\
\end{aligned}
\end{equation}
where
\begin{equation}\label{prop 4.1 transformed eq 3}
\begin{aligned}
    \olsi{Z}_{disc,T}  = \frac{1}{N} \sum_{i=1}^N Z^i_{disc,T}, \quad   U_i(z)  = - \exp(-\frac{1}{\delta_i} z).\\
\end{aligned}
\end{equation}
 Therefore, the characterization of controls for the Nash equilibrium can be considered through Eqs.~\eqref{prop 4.1 transformed eq 1}--\eqref{prop 4.1 transformed eq 3}. This is precisely the same problem described in \cite[Section~2, Corollary~4]{lackerNoConsump}, which gives a closed-loop Nash equilibrium given by the controls
\begin{equation*}
  \tilde{\pi}^{i,*}_t 
     = \frac{\mu_1-r}{\sigma^2} \left( \delta_i + \frac{\theta_i\bar{\delta}}{1-\bar{\theta}} \right),
\end{equation*}
for $i \in \{1 ,\cdots, N\}$, where $\bar{\delta} = \frac{1}{N} \sum_{i=1}^N \delta_i$ and $\bar{\theta} = \frac{1}{N} \sum_{i=1}^N \theta_i$. Substituting $\tilde{\pi}^{i,*}_t$ into Eq.~\eqref{Prop 4.1 transformed control}, we get that there is a closed-loop Nash equilibrium control given by the choices of each player $i$ by
\begin{equation*}
  \pi^{i,*}_t X_t^{i, \ast}
     = \frac{\mu_1-r}{\sigma^2} \left( \delta_i + \frac{\theta_i\bar{\delta}}{1-\bar{\theta}} \right) \frac{1}{e^{-(r+\lambda a)t}},
\end{equation*}
That is, at time $t$, the equilibrium strategy is to invest a deterministic dollar amount into the risky asset, independent of the current wealth level. This proves Proposition \ref{thmCARAnoconsump}.


\subsection{Proof of Proposition \ref{thmCRRAnoconsump}}\label{CRRA utility proof no consump}
Since the dynamical system is exactly the same as in Appendix \ref{CARA utility proof no consump}, we know that $Z^i_t = X^i_t + aY^i_t$ satisfies
\begin{equation}\label{z eqn pre transformation CRRA no consump}
  \ud Z^i_t =  \left( (\mu_1 - r) \pi^i_t X^i_t  + (r + \lambda a)Z^i_t  \right) \ud t + \sigma \pi^i_t X^i_t \ud W_t. \\
\end{equation}
We are assuming that the initial path $X^i_t = \zeta^i(t)$ for $t \leq 0$ is chosen so that $Z^i_0 = X^i_0 + aY^i_0 > 0$. 

Next, since $\pi^i$ is admissible, it has the property $|\pi^i_t X^i_t| \leq K |Z^i_t|$ for some $K>0$. In particular, this means that $\pi^i_t X^i_t = 0$ whenever $Z^i_t = 0$. We will soon show that $Z^i_t >0$. 
Now, for each $i$, we define the transformed control $\tilde{\pi}^i$ by
\begin{equation}\label{transformed control (before proof) Prop 3.2 1}
\tilde{\pi}^i_t = \begin{cases}
        \pi^i_t \frac{X^i_t}{Z^i_t}, &  Z^i_t \neq 0, \\
        0, & Z^i_t = 0.
    \end{cases}
\end{equation}
Since $|\pi^i_t X^i_t| \leq K |Z^i_t|$, we have $|\tilde{\pi}^i_t| \leq K$ and claim that the dynamics \eqref{z eqn pre transformation CRRA no consump} can be written in terms of $\tilde{\pi}^i_t$ by
\begin{equation}\label{z eqn post transformation CRRA no consump}
  \ud Z^i_t =  \left( (\mu_1 - r) \tilde{\pi}^i_t Z^i_t  + (r + \lambda a)Z^i_t  \right) \ud t + \sigma \tilde{\pi}^i_t Z^i_t\ud W_t. \\
\end{equation}
Since $|\tilde{\pi}^i_t| \leq K$, we have that $\mathbb{E} \left[ \int_0^t \left(\tilde{\pi}^i_s \right)^2 \ud s \right] < \infty$ for each $t \in (0,T]$ and therefore, $Z^i_t$ has the unique solution 
\begin{equation*}
    Z^i_t = Z^i_0 \exp\left[ (r+\lambda a)t + \int_0^t[ (\mu_1 - r) \tilde{\pi}^i_s - \frac{1}{2}\sigma^2 (\tilde{\pi}^i_s)^2] \ud s + \int_0^t \tilde{\pi}^i_s \ud W_s  \right],
\end{equation*}
for all $t \in [0,T]$. This indicates $Z^i_t > 0$ as we have assumed the initial path $X^i_{(-\infty,0]}$ is such that $Z^i_0 = X^i_0 + aY^i_0 > 0$. Also, one can show $X^i_t >0$ for all $t \leq T$. First $X^i_t = \zeta^i(t) > 0$ for $t \leq 0$. Now, by contradiction take $t' > 0$ to be the first hitting time $X^i_{t'} = 0$. Since $X^i_{t} > 0$ for $t<t'$, we see that $Y^i_{t'} = \int_{-\infty}^{t'} \lambda e^{-\lambda(t- s)} X^i_{s} \ud s > 0$. Therefore $0 < Z_{t'}^i = X^i_{t'} + aY^i_{t'} = aY^i_{t'}$, which is a contradiction as $a<0$ and $Y^i_{t'} > 0$. Since $Z^i_t >0$, the transformed control from Eq.~\eqref{transformed control (before proof) Prop 3.2 1} can be written simply as 
\begin{equation}\label{transformed control (after proof) Prop 3.2 1}
\tilde{\pi}^i_t = \pi^i_t \frac{X^i_t}{Z^i_t},
\end{equation}
and since $X^i_t > 0$ as well, this transformation is invertible which we will use later.

Now, multiplying Eq.~\eqref{z eqn post transformation CRRA no consump} by the integrating factor $e^{-(r+\lambda a)t}$ , we can write the equation for $Z^i_{disc,t} = e^{-(r+\lambda a)t} Z^i_t$ as
\begin{equation}\label{Prop 3.2 reduced eq 1}
  \ud Z_{disc,t}^i =   (\mu_1 - r) \tilde{\pi}^i_t Z_{disc,t}^i \ud t + \sigma \tilde{\pi}^i_t Z_{disc,t}^i  \ud W_t, \quad t \in (0,T]. \\
\end{equation}
We restate the reward function in Proposition \ref{thmCARAnoconsump} for convenience. We have that the reward for player $i$ is given by  
\begin{equation}\label{Prop 3.2 reduced eq 2}
\begin{aligned}
    J^i & = \mathbb{E} \left[U_i \left( Z^i_{disc,T}  \olsi{Z}_{disc,T}^{-\theta_i}  \right) \right], 
\end{aligned}
\end{equation}
where $0 < \theta_i < 1$ for each $i$, and where
\begin{equation}\label{Prop 3.2 reduced eq 3}
  \olsi{Z}_{disc,T}  =  \left(\prod_{i=1}^N Z^i_{disc,T}  \right)^{1/N}, \qquad  U_i(z) = \begin{cases}  
        \frac{1}{1- \frac{1}{\delta_i}} z^{1- \frac{1}{\delta_i}}, &  \delta_i \neq 1, \\
        \log(z), &  \delta_i = 1,
    \end{cases}
\end{equation}
with $\delta_i>0$. The Nash equilibrium problem defined by Eqs.~\eqref{Prop 3.2 reduced eq 1}--\eqref{Prop 3.2 reduced eq 3} for generic, progressively measurable controls $(\tilde{\pi}^i)_{i=1}^N$ such that $\mathbb{E} \left[ \int_0^t \left(\tilde{\pi}^i_s \right)^2 \ud s \right] < \infty$  falls into the formulation of \cite[Section~3, Corollary~15]{lackerNoConsump}. Using the results therein, one has a closed-loop Nash equilibrium given by the controls
\begin{equation*}
  \tilde{\pi}^{i,*}_t 
     = \frac{\mu_1-r}{\sigma^2} \left( \delta_i - \frac{\theta_i(\delta_i-1)\bar{\delta}}{1+\olsi{\theta(\delta-1)}} \right),
\end{equation*}
for $i \in \{1 ,\cdots, N\}$. Solving for $\pi^{i,*}_t$, from Eq.~\eqref{transformed control (after proof) Prop 3.2 1} we get 
\begin{equation*}
        \pi^{i,*}_t  =  \frac{\mu_1-r}{\sigma^2} \left( \delta_i - \frac{\theta_i(\delta_i-1)\bar{\delta}}{1+\olsi{\theta(\delta-1)}} \right) \frac{X^i_t + aY^i_t}{X^i_t},
\end{equation*}
which holds as we have shown that $X^i_t >0$. Lastly, we see that the Nash equilibrium controls do satisfy the admissibility condition as $|\pi^{i,*}_t  X^i_t| = \left|\frac{\mu_1-r}{\sigma^2} \left( \delta_i - \frac{\theta_i(\delta_i-1)\bar{\delta}}{1+\olsi{\theta(\delta-1)}} \right)\right| |Z^i_t|$. This completes the proof of Proposition \ref{thmCRRAnoconsump}.

\subsection{Proof of Proposition \ref{thmCRRAconsump}}\label{CRRA utility proof w consump}
We now form the solution for the problem defined by  Eq.~\eqref{XdyanmicsMertonwCons}--\eqref{costMertonwConsCRRA3}, proceeding similarly to the proof in Appendix \ref{CRRA utility proof no consump}. Restating the dynamics given in Proposition \ref{thmCRRAconsump}, we have for each $i \in \{1, \cdots, N \},$
\begin{equation*}
\begin{aligned}
 & \ud X^i_t =  \left((\mu_1 - r) \pi^i_t X^i_t  + rX^i_t -\mu_2 Y^i_t - c^i_t X^i_t \right) \ud t + \sigma \pi^i_t X^i_t \ud W_t, \ t \in (0,T], \\
 & X^i_t = \zeta^i(t), \ t \in (-\infty,0], \\
\end{aligned}
\end{equation*}
where the delay variable, $Y^i_t$, is given by $
Y^i_t = \int_{-\infty}^t \lambda e^{-\lambda(t- s)} X^i_{s} \ud s$. As usual, define $Z^i_t = X^i_t + aY^i_t$ and $Z^i_{disc,t} = e^{-(r+\lambda a)t} Z^i_t$, where $a = \frac{-(r+\lambda) + \sqrt{(r+\lambda)^2 - 4 \lambda \mu_2}}{2\lambda}$. Following the similar computation in the proof in Appendix \ref{CARA utility proof no consump}, one can show
\begin{equation}\label{transformed eqn but no transformed controls 3.3 proof}
  \ud Z_t^i =  \left( (r+\lambda a)Z^i_t + (\mu_1 - r) \pi^i_t  X^i_t - c^i_t X^i_t  \right) \ud t + \sigma \pi^i_t X^i_t \ud W_t. \\
\end{equation}
Now, for each $i$, define the transformed controls $(\tilde{\pi}^i$, $\tilde{c}^i)$ by
\begin{equation*}
(\tilde{\pi}^i_t,\tilde{c}^i_t) = \begin{cases}
    (\pi^i_t \frac{X^i_t}{Z^i_t},c^i_t \frac{X^i_t}{Z^i_t}), & Z^i_t \neq 0, \\
    (0,0), & Z^i_t = 0.
\end{cases} \quad 
\end{equation*}
Then, Eq.~\eqref{transformed eqn but no transformed controls 3.3 proof} can be written as 
\begin{equation}\label{transformed eqn 3.3 proof}
  \ud Z_t^i =  \left( (r+\lambda a)Z^i_t + (\mu_1 - r) \tilde{\pi}^i_t  Z^i_t - \tilde{c}^i_t Z^i_t  \right) \ud t + \sigma \tilde{\pi}^i_t Z^i_t \ud W_t. \\
\end{equation}
Since $|\tilde{\pi}^i_t|, |\tilde{c}^i_t| \leq K$, we have that $\mathbb{E} \left[ \int_0^t \left(\tilde{\pi}^i_s\right)^2 + \left(\tilde{c}^i_s \right)^2 \ud s \right] < \infty$ for each $t \in (0,T]$, and $Z^i_t$ has the unique solution 
\begin{equation*}
    Z^i_t = Z^i_0 \exp\left[ (r+\lambda a)t + \int_0^t[ (\mu_1 - r) \tilde{\pi}^i_s - \frac{1}{2}\sigma^2 (\tilde{\pi}^i_s)^2 - \tilde{c}^i_s] \ud s + \int_0^t \tilde{\pi}^i_s \ud W_s  \right].
\end{equation*}
Therefore, $Z^i_t >0$ and  $X^i_t >0$, following the same argument as in Appendix \ref{CRRA utility proof no consump}, and the transformed controls become simply 
\begin{equation}\label{Prop 3.3 transformed controls}
(\tilde{\pi}^i_t,\tilde{c}^i_t) =
    (\pi^i_t \frac{X^i_t}{Z^i_t},c^i_t \frac{X^i_t}{Z^i_t}),
\end{equation}
which can be inverted for a given choice of controls $(\tilde{\pi}^i_t,\tilde{c}^i_t)$. 

Next, multiplying Eq.~\eqref{transformed eqn 3.3 proof} by the integrating factor $e^{-(r+\lambda a)t}$, we get
\begin{equation}\label{eq Prop 3.3 1}
  \ud Z_{disc,t}^i =  \left( (\mu_1 - r) \tilde{\pi}^i_t Z_{disc,t}^i - \tilde{c}^i_t Z_{disc,t}^i  \right) \ud t + \sigma \tilde{\pi}^i_t Z_{disc,t}^i \ud W_t, \quad t \in (0,T]. \\
\end{equation}
For convenience, we restate the reward defined in Proposition \ref{thmCRRAconsump},  which is
\begin{equation*}
  J^i = E\Big[\int_0^T U^i(C_{disc,t}^i \olsi{C_{disc,t} }^{-\theta_i}) \ud t + \epsilon_i U_i\Big( Z_{disc,T}^i \olsi{Z_{disc,T}}^{-\theta_i} \Big) \Big],
\end{equation*}
where
\begin{equation*}
    \begin{aligned}
   &  C_{disc,t}^i =  e^{-(r+\lambda a)t} c^i_t X^i_t, & \olsi{C_{disc,t}} = \big(\prod_{i=1}^N C^i_{disc,t} \big)^{1/N}, \\
    & Z_{disc,t}^i = e^{-(r+\lambda a)t}Z^i_t, 
    & \olsi{Z_{disc,t}} = \big(\prod_{i=1}^N Z^i_{disc,t} \big)^{1/N}, \\
    \end{aligned}
\end{equation*}
 and where $0<\theta_i<1$, $\epsilon_i > 0$, and 
\begin{equation}\label{eq Prop 3.3 2}
 U_i(z)  = \begin{cases}  
        \frac{1}{1- \frac{1}{\delta_i}} z^{1- \frac{1}{\delta_i}}, &  \delta_i \neq 1, \\
        \log(z), &  \delta_i = 1,\\
    \end{cases}\\
\end{equation}
with $\delta_i>0$.
In terms of the transformed controls defined by Eq.~\eqref{Prop 3.3 transformed controls}, we can write the reward function for player $i$ as
\begin{equation}\label{eq Prop 3.3 3}
  J^i = E\Big[\int_0^T U^i(\tilde{c}^i_t Z_{disc,t}^i \olsi{(\tilde{c}_t Z_{disc,t})}^{-\theta_i}) \ud t + \epsilon_i U_i\Big( Z_{disc,T}^i \olsi{Z_{disc,T}}^{-\theta_i} \Big) \Big],
\end{equation}
where $\overline{\tilde{c}_t Z_{disc,t}} = \big(\prod_{i=1}^N \tilde{c}^i_t Z^i_{disc,t} \big)^{1/N}$. 


The Nash equilibrium problem defined by Eqs.~\eqref{eq Prop 3.3 1}--\eqref{eq Prop 3.3 3} is given in \cite[Corollary~2.3]{lackerwconsump} for the generic, progressively measurable controls $(\tilde{\pi}^i,\tilde{c}^i)_{i=1}^N$ that satisfy the admissibility conditions $\mathbb{E} \left[ \int_0^t \left(\tilde{\pi}^i_s\right)^2 + \left(\tilde{c}^i_s \right)^2 \ud s \right] < \infty$ and $\tilde{c}^i_t \geq 0$ for each $t \in [0,T]$. Specifically, \cite[Corollary~2.3]{lackerwconsump} gives that there exists a closed-loop Nash equilibrium given by the controls
\begin{equation*}
  \begin{aligned}
        \tilde{\pi}^{i,*}_t & =  \frac{\mu_1-r}{\sigma^2} \left( \delta_i - \frac{\theta_i(\delta_i-1)\bar{\delta}}{1+\olsi{\theta(\delta-1)}} \right), \\
        \tilde{c}^{i,*}_t & = \begin{cases}
            \left(\beta_i^{-1} + (\gamma_i^{-1} - \beta_i^{-1}) e^{-\beta_i(T-t} \right)^{-1} , & \delta_i \neq 1, \\
            \left(T-t +\gamma_i^{-1} \right)^{-1}, & \delta_i = 1,\\   
        \end{cases} \\
    \end{aligned}  
\end{equation*}
for $i \in \{1 ,\cdots, N\}$, where the notations $\bar{\delta}, \olsi{\theta(\delta-1)}$ correspond to the arithmetic average, and the parameters $\beta_i$ and $\gamma_i$ are given by
\begin{equation*}
  \begin{aligned}
    \beta_i & = \frac{1}{2}(1-\delta_i) \left( \frac{\mu_1-r}{\sigma}\right)^2 \left(1- \frac{\theta_i \bar{\delta}}{1+\olsi{\theta(\delta-1)}} \right) \left( \delta_i - \frac{\theta_i \bar{\delta}}{1+\olsi{\theta(\delta-1)}} (\delta_i - 1)\right),\\
    \gamma_i & = \epsilon_i^{-\delta_i} \left( \left( \prod_{k=1}^N  \epsilon_k^{\delta_k} \right)^{1/N} \right)^{\theta_i(\delta_i-1)/(1+\olsi{\theta(\delta-1)})}.\\
    \end{aligned}  
\end{equation*}
Substituting back for $(\pi^{i,*}, c^{i,*})$ through the transformations defined by Eq.~\eqref{Prop 3.3 transformed controls}, we see that  $(\pi^{i,*}, c^{i,*})$  is indeed in $\mathbb{A}^i$ and the result from Proposition \ref{thmCRRAconsump} holds.
\end{document}